
\documentclass[12pt,reqno]{amsart}
\usepackage{amsmath}
\usepackage{amssymb}
\usepackage[left=3cm,top=3cm,right=3cm,bottom=3cm]{geometry}
\usepackage{epsfig}
\usepackage[normalem]{ulem}
\usepackage[usenames,dvipsnames]{color}
\usepackage{todonotes}
\usepackage{bm}

\newcommand{\scr}{\mathcal}

\newcommand{\Ss}{{\mathcal S}}
\newcommand{\Nn}{{\mathcal N}}

\newcommand{\G}{{\mathcal G}}

\newcommand{\Gnp}{\G(n,p)}

\begin{document}
\newtheorem{theorem}{Theorem}[section]
\newtheorem{lemma}[theorem]{Lemma}
\newtheorem{definition}[theorem]{Definition}
\newtheorem{conjecture}[theorem]{Conjecture}
\newtheorem{proposition}[theorem]{Proposition}
\newtheorem{algorithm}[theorem]{Algorithm}
\newtheorem{corollary}[theorem]{Corollary}
\newtheorem{observation}[theorem]{Observation}
\newtheorem{problem}[theorem]{Open Problem}
\newtheorem{remark}[theorem]{Remark}
\newcommand{\noin}{\noindent}
\newcommand{\ind}{\indent}
\newcommand{\om}{\omega}
\newcommand{\I}{\mathcal I}
\newcommand{\pp}{\mathcal P}
\newcommand{\ppp}{\mathfrak P}
\newcommand{\N}{{\mathbb N}}
\newcommand{\LL}{\mathbb{L}}
\newcommand{\R}{{\mathbb R}}
\newcommand{\E}{\mathbb E}
\newcommand{\Prob}{\mathbb{P}}
\newcommand{\eps}{\varepsilon}
\newcommand{\aas}{{a.a.s.}}

\newcommand{\pt}[1]{\textcolor{blue}{#1}}
\newcommand{\pc}[1]{{\bf [~Pawel:\ } {\em \textcolor{blue}{#1}}{\bf~]}}

\newcommand{\SEt}[1]{\textcolor{purple}{#1}}
\newcommand{\SE}[1]{{\bf [~SE:\ } {\em \textcolor{purple}{#1}}{\bf~]}}

\newcommand{\CMc}[1]{{\color{orange}{\bf [~Calum:\ } \color{orange}{\em #1}\color{orange}{\bf~]}}}
\newcommand{\CM}[1]{\textcolor{orange}{#1}}

\title{Localization Game for Random Graphs}

\author{Andrzej Dudek}
\address{Department of Mathematics, Western Michigan University, Kalamazoo, MI, USA}
\thanks{The first author is supported in part by Simons Foundation Grant \#522400.}
\email{\tt andrzej.dudek@wmich.edu}

\author{Sean English}
\address{Department of Mathematics, University of Illinois Urbana-Champaign, Champaign, IL, USA}
\email{\texttt{SEnglish@Illinois.edu}}

\author{Alan Frieze}
\address{Department of Mathematical Sciences, Carnegie Mellon University, Pittsburgh, PA, USA}
\email{alan@random.math.cmu.edu} 
\thanks{The third author is supported in part by NSF grant DMS1362785.}

\author{Calum MacRury}
\address{Department of Computer Science, University of Toronto, Toronto, ON, Canada}
\email{\tt cmacrury@cs.toronto.edu}

\author{Pawe\l{} Pra\l{}at}
\address{Department of Mathematics, Ryerson University, Toronto, ON, Canada}
\email{\texttt{pralat@ryerson.ca}}
\thanks{The fifth author is supported in part by NSERC Discovery Grant.}

\maketitle

\begin{abstract}
We consider the localization game played on graphs in which a cop tries to determine the exact location of an invisible robber by exploiting distance probes. The minimum number of probes necessary per round to locate the robber on a given graph $G$ is the localization number $\zeta(G)$. In this paper, we improve the bounds for dense random graphs determining the asymptotic behaviour of $\zeta(G)$. Moreover, we extend the argument to sparse graphs.
\end{abstract}

\section{Introduction}

Graph searching focuses on the analysis of games and graph processes that model some form of intrusion in a network and efforts to eliminate or contain that intrusion. One of the best known examples of graph searching is the game of Cops and Robbers, wherein a robber is loose on the network and a set of cops attempts to capture the robber. For a book on graph searching see~\cite{book}.

In this paper we consider the \emph{Localization Game} that is related to the well studied Cops and Robbers game. For a fixed integer $k\geq 1$, the localization game with $k$ sensors is a two player combinatorial game played on a graph $G$ which is known to both players. To initialize the game, the \emph{cops} first choose a set $S_1 \subseteq V(G)$
with $|S_1|=k$. The \emph{robber} then chooses a vertex $v\in V(G)$ to start at, whose location on the graph is hidden from the cops. The cops then learn the graph distance between the current position of the robber and the vertices of $S_1$. If this information is sufficient to locate the robber, then the cops win immediately. Otherwise, a new round begins, and the cops now choose another arbitrary subset $S_2 \subseteq V(G)$ of size $k$, based on all the past information available to them. At this point, the robber is allowed to move to any vertex of distance one from its current location, based on $S_1$ \textit{and} $S_2$. The distances of the robber's new location to the vertices of $S_2$ are then presented to the cop, at which point the cops win if these new distance values in conjunction with the previous ones are sufficient to locate the robber. If the cops' information is still insufficient to win the game, then another round begins. These rounds continue iteratively until the cop is able to locate the robber, in which case we say that the cops wins,
or the game proceeds indefinitely, in which case we say that the robber wins. 
We define the localization game in more detail in Section \ref{section definition of the game}.

Given $G$, the localization number, written $\zeta(G)$, is the minimum $k$ so that the cop can eventually locate the robber using sets $W$ of size $k$. The localization game was introduced for one probe ($k=1$) in~\cite{paper19, paper20} and was further studied in~\cite{paper7, paper9, paper5, paper16, Anthony_Bill}.
The localization number is related to the metric dimension of a graph, in a way that is analogous to how the cop number is related to the domination number. The metric dimension of a graph $G$, written $\beta(G)$, is the minimum number of probes needed in the localization game so that the cop can win in one round. It follows that $\zeta(G) \le \beta(G)$, but in many cases this inequality is far from tight. Very recently, \'{O}dor and Thiran \cite{Odor2019} studied a game on $\Gnp$ similar to the localization game, in which the robber is not allowed to move, and the cops place a single sensor in each turn based on the currently revealed distance probes to the robber. In this game, the parameter of interest is the number of rounds necessary to locate the robber, which they refer to as the \textit{sequential metric dimension}. It is clear that the sequential metric dimension is no greater than the metric dimension,
however its relationship with the localization number is uncertain. In Section \ref{sec:results}, our results imply that the localization number can be
strictly less than the sequential metric dimension.

\bigskip

In this paper we present results obtained for the \emph{binomial random graph} $\Gnp$. More precisely, $\Gnp$  is a distribution over the class of graphs with vertex set $[n]$ in which every pair $\{i,j\} \in \binom{[n]}{2}$ appears independently as an edge in $G$ with probability~$p$. Note that $p=p(n)$ may (and usually does) tend to zero as $n$ tends to infinity. We say that $\Gnp$ has some property \emph{asymptotically almost surely} or $\aas$ if the probability that $\Gnp$ has this property tends to $1$ as $n$ goes to infinity.

The localization number for dense random graphs (specifically the diameter two case) was studied in~\cite{Dudek}. The results obtained in~\cite{Dudek} can be summarized as follows (see Section~\ref{sub:notation} for asymptotic notation that we use below). If $d:=p \cdot n=n^{x+o(1)}$ for some $x \in (1/2,1)$, then the following holds a.a.s.\ for $G \in \Gnp$:
$$
(1+o(1)) (2x-1) \frac {n \log n}{d} \le \zeta(G) \le (1+o(1)) f(x) \frac {n \log n}{d},
$$
where 
$$
f(x) := 
\begin{cases}
x & \textrm{if $2/3 < x < 1$} \\
1-x/2 & \textrm{otherwise.} 
\end{cases}
$$
Hence, the order of magnitude of $\zeta(G)$ is determined for this range of $d$. If $d = p \cdot n = n^{1+o(1)}$ and $p \le 1 - 3 \log \log n / \log n$, then the following holds a.a.s.\ for $G \in \Gnp$:
$$
\zeta(G) \sim \frac {2 \log n}{\log (1/\rho)},
$$
where
$$
\rho := p^2 + (1-p)^2.
$$
Thus, the asymptotic behaviour of $\zeta(G)$ was determined in this range.

\bigskip

In this paper, we improve the bounds for dense graphs showing that if $d:=p \cdot n=n^{x+o(1)}$ for some $x \in (1/2,1)$, then a.a.s.\ $\zeta(\Gnp) \sim x n \log n / d$. Our proofs can be easily generalized so we extend our results to cover sparser graphs. The main results are stated in Section~\ref{sec:results}. Notation and some auxiliary observations are presented in Section~\ref{sec:notation}. Section~\ref{sec:perfect} provides a precise definition of the localization game, and a convenient reformulation of the game so that it can be viewed as a perfect information combinatorial game. Finally, lower and upper bounds are proved in Section~\ref{sec:lower} and, respectively, Section~\ref{sec:upper}.

\section{Results}\label{sec:results}
Recall that the asymptotic behaviour of the localization number is already determined for very dense graphs and so we may concentrate on $d = o(n)$.
Our results are slightly stronger than what is stated below but our goal is to summarize the most important consequences. The reader is directed to Sections~\ref{sec:lower} and~\ref{sec:upper} for more details. The first theorem below concentrates on random graphs with diameter $i+1$ and the average degree not too close to the threshold where the diameter drops to $i$. This result follows immediately from Theorem~\ref{theorem lower bound} and Theorem~\ref{theorem upper bound}.

\begin{theorem}\label{thm:main}
Suppose that $d:=p \cdot n$ is such that $\log n \ll d \ll n$. Suppose that $i=i(n) \in \N$ is such that $d^i \ll n$ and $d^{i+1}/n-2\log n \to \infty$. 
Then, the following holds a.a.s.\ for $G \in \Gnp$: 
\[
\left(\log d - 3\log\log n\right)\frac{n}{d^i} \le \zeta(G)\le (1+o(1)) \left(\log d + 2 \log\log n\right)\frac{n}{d^i}\,.
\]
As a result, if $d \ge (\log n)^{\omega}$ for some $\omega = \omega(n) \to \infty$ as $n \to \infty$, then
$$
\zeta(G) \sim \frac{n \log d}{d^i}\,.
$$
In particular, if there exists $i\in \N$ such that $d=n^{x+o(1)}$ for some $x\in (\frac{1}{i+1},\frac{1}i)$, then 
$$
\zeta(G) \sim \frac{x n \log n}{d^i}\,.
$$
\end{theorem}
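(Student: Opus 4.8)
The plan is to obtain the explicit two-sided estimate by establishing a lower and an upper bound separately, and then to read off both asymptotic consequences by elementary simplification. For the two bounds I would work with the perfect-information reformulation of the game set up in Section~\ref{sec:perfect}: the cop maintains the set $R_t$ of positions consistent with all distance vectors seen so far, the robber keeps $|R_t|\ge 2$, and the cop wins once $|R_t|=1$. The crucial structural input is that the hypotheses are precisely the classical conditions forcing the diameter to equal $i+1$: since $d^i \ll n$ not all pairs lie within distance $i$, while $d^{i+1}/n - 2\log n \to \infty$ guarantees every pair is within distance $i+1$. Hence every distance lies in $\{1,\dots,i+1\}$, a single probe $w$ assigns the common response $i+1$ to the $(1-o(1))n$ vertices outside $B_i(w)$, and it can finely separate vertices only inside $B_i(w)$, a ball of size $(1+o(1))d^i$. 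The whole problem thus reduces to how many balls of radius $i$, across all rounds, are needed to pin the robber to a single vertex.

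For the lower bound I would exhibit an evader strategy keeping $|R_t|\ge 2$: a covering/second-moment estimate shows that while $k d^i/n$ stays below the relevant threshold the cop's $k$ probes cannot reduce the consistent set to a singleton, and the robber's unit-speed move preserves this, giving $\zeta(G) \ge (\log d - 3\log\log n)\,n/d^i$. For the upper bound I would design a cop strategy that exploits both the unbounded number of rounds and the fact that the robber moves by at most one step per round, repeatedly shrinking $R_t$ by probing balls of radius $i$ while controlling the limited regrowth of $R_t$ under movement, yielding $\zeta(G)\le (1+o(1))(\log d + 2\log\log n)\,n/d^i$. I expect this upper bound to be the main obstacle: a one-shot argument (metric dimension) would need coverage multiplicity $\log n$ to leave no vertex uncovered, whereas the localization number carries only $\log d$. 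Matching the constant $\log d$ therefore requires genuinely using the robber's forced mobility across rounds rather than a static covering, and this is exactly the gain that separates $\zeta$ from the sequential metric dimension.

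Granting the two bounds, the first consequence is immediate: if $d \ge (\log n)^\omega$ with $\omega \to \infty$, then $\log d \ge \omega \log\log n$, so $\log\log n = o(\log d)$ and both $\log d - 3\log\log n$ and $(1+o(1))(\log d + 2\log\log n)$ equal $(1+o(1))\log d$; multiplying by $n/d^i$ gives $\zeta(G) \sim n \log d/d^i$. Finally, for $d = n^{x+o(1)}$ with $x \in (\tfrac{1}{i+1},\tfrac{1}{i})$ I would just verify the standing hypotheses: $\log n \ll d \ll n$ since $0 < x < 1$; $d^i = n^{ix+o(1)} \ll n$ since $ix < 1$; and $d^{i+1}/n = n^{(i+1)x-1+o(1)} \to \infty$ polynomially, dominating $2\log n$, since $(i+1)x > 1$. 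As $d = n^{x+o(1)} \gg (\log n)^\omega$ for a suitable $\omega \to \infty$, the previous consequence applies, and substituting $\log d = (x+o(1))\log n \sim x\log n$ gives $\zeta(G) \sim x\,n\log n/d^i$, as claimed.
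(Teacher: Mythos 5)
Your final paragraph --- deducing $\zeta(G)\sim n\log d/d^i$ once $\log\log n=o(\log d)$, and then $\zeta(G)\sim xn\log n/d^i$ for $d=n^{x+o(1)}$ with $x\in(\tfrac{1}{i+1},\tfrac1i)$ after checking the standing hypotheses --- is correct and is exactly how the paper derives Theorem~\ref{thm:main} from Theorem~\ref{theorem lower bound} and Theorem~\ref{theorem upper bound}. Your sketches of those two ingredient bounds also point in the same direction as the paper's arguments: the lower bound does come from an evasion strategy, the upper bound from an iterated round-by-round shrinking of the consistent set, and you correctly isolate why the constant is $\log d$ rather than $\log n$ (the robber's move regrows the consistent set by only a factor of roughly $d$ per round, so each round's probes need only cut the largest signature class by slightly more than $d$, costing $kd^i/n>\log d$ rather than $\log n$).

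However, the two bounds are announced rather than proved, and the announced mechanisms would not close as stated. For the lower bound, ``a covering/second-moment estimate shows the cop's $k$ probes cannot reduce the consistent set to a singleton'' omits the content: the paper's robber always occupies the equivalence class of signature $(i+1,\dots,i+1)$, and one must show \emph{uniformly over all} $\binom{n}{s}$ sensor sets $S$ of size $s=(\log d-3\log\log n)n/d^i$ that $|V\setminus\Nn(S,i)|\ge r/2$ with $r=n\log^3n/d$. This requires a BFS edge-exposure of $\Nn(S,i-1)$, a conditional Chernoff bound, and a union bound that succeeds only because $r\gg s\log n$ --- the constant $3$ in $3\log\log n$ is chosen precisely to make this work, and a Chebyshev-type second-moment bound for a fixed $S$ gives nowhere near the exponentially small failure probability the union bound needs. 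A separate expansion lemma (every set of size $r/4$ dominates all but $r/4$ vertices) is then needed so the robber can legally \emph{move into} the new far class each round; largeness of that class alone does not provide a legal move. For the upper bound, ``design a cop strategy'' must be replaced by an actual one --- the paper probes a fresh uniformly random $k$-set every round --- and the quantitative engine is that the distinguishing set $D(x,y)=\bigcup_{j\le i}\bigl(\Ss(x,j)\setminus\Ss(y,j)\bigr)\cup\bigl(\Ss(y,j)\setminus\Ss(x,j)\bigr)$ has size $(1+o(1))2d^i$ uniformly over pairs (itself an expansion lemma), so each pair survives a round with probability at most $d^{-2}\log^{-4}n$; one then controls the largest class via $\sum_j\binom{|R^{t+1}_j|}{2}$ and Markov's inequality, absorbs the factor-$d$ regrowth via the maximum degree, and finally uses determinacy of the perfect-information reformulation to convert ``the random strategy wins with probability greater than $1/2$'' into a deterministic winning strategy. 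Without these steps the constants $\log d\pm O(\log\log n)$, which are the whole point of the theorem, are not justified.
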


Before we move to our next result, let us mention the relationship between $\zeta(G)$ and $\beta(G)$. The bounds for $\beta(G)$ obtained in~\cite{metric_dimension} are quite technical but for the range of $d$ covered by Theorem~\ref{thm:main} we see that the following holds a.a.s.\ for $G \in \Gnp$: 
$$
(1+o(1)) \frac{n \log (d^i) }{d^i} \le \beta(G) \le (1+o(1)) \frac{n \log n}{d^i}\,.
$$

In~\cite{Dudek}, it was conjectured that when $d=n^{x+o(1)}$ for some $x\in (2/3,1)$, we have that $\zeta(G) < \beta(G)$. In this case, since $i=1$, our asymptotic bound on $\zeta(G)$ matches with the lower bound on $\beta(G)$ given above, and so in order to prove or disprove such a conjecture, one would need to obtain new bounds on the metric dimension. 

On the other hand, for sparser graphs (of diameter at least $3$; $i \ge 2$), it follows that $\zeta(G) < \beta(G)$. In fact, if $d=n^{x+o(1)}$ for some $x\in (\frac{1}{i+1},\frac{1}i)$, $i \in \N \setminus \{1\}$, then a.a.s.\ $i + o(1) \le \beta(G) / \zeta(G) \le 1/x + o(1) < i+1$ and so these two graph parameters are a multiplicative constant far away from each other (the ratio is roughly equal to the diameter of the graph). Moreover, for very sparse graphs, say for example $d = \log^6 n$, a.a.s.\ $\zeta(G) = \Theta( n \log \log n / d^i)$ whereas $\beta(G) = \Theta( n \log n / d^i)$, implying that for such value of $d$, $\zeta(G)=o(\beta(G))$. We remark
that the results of \'{O}dor and Thiran \cite{Odor2019} show that the sequential metric dimension of $G \in \Gnp$ is $\aas$ within a constant factor of $\beta(G)$.
As such, $\zeta(G)$ is also dominated by the sequential metric dimension of $G$ for this parameter range of $\Gnp$.

\bigskip

We are less precise once we get closer to the threshold where the diameter drops from $i+1$ to $i$. If $c = c(n) := d^{i}/n = \Theta(1)$, then we only determine the order of $\zeta(G)$. When $c \to \infty$ as $n \to \infty$, then the upper bound for $\zeta(G)$ does not match the corresponding lower bound. Thus, determining the behaviour of the localization number when $c \to \infty$ remains an open problem. Below, we state the result for $c = \Theta(1)$ and we direct the reader for more details on the case when $c \to \infty$ to Sections~\ref{sec:lower} and~\ref{sec:upper}. This result follows immediately from Theorem~\ref{theorem lower bound} and Theorem~\ref{theorem upper bound2}.

\begin{theorem}\label{thm:main2}
Suppose that $d:=p \cdot n$ is such that $\log^3 n \ll d \ll n$. Suppose that $i=i(n) \in \N$ is such that $c = c(n) := d^i / n \to A \in \R_+$. 
Then, the following holds a.a.s.\ for $G \in \Gnp$: 
\[
\left(\log d - 3\log\log n\right)\frac{1}{A} \le \zeta(G)\le (1+o(1)) \left(\log d + 2 \log\log n\right)\frac{e^A}{1-e^{-A}}\,.
\]
As a result, if $d \ge (\log n)^{3+\epsilon}$ for some $\epsilon > 0$, then 
$$
\zeta(G) = \Theta \left( \frac{n \log d}{d^i} \right)\,.
$$
\end{theorem}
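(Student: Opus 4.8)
The plan is to treat Theorem~\ref{thm:main2} as a corollary of the two general estimates, Theorem~\ref{theorem lower bound} and Theorem~\ref{theorem upper bound2}, and to recover the stated form simply by substituting $c = c(n) = d^i/n$ and passing to the limit $c \to A$. All of the probabilistic content is carried by those two theorems; what remains here is an elementary asymptotic simplification, valid on the a.a.s.\ event that both of them hold.

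For the lower bound I would invoke Theorem~\ref{theorem lower bound}, which a.a.s.\ gives $\zeta(G) \ge (\log d - 3\log\log n)\,\frac{n}{d^i}$. Since $\frac{n}{d^i} = 1/c$ and $c \to A \in \R_+$, we have $1/c \to 1/A$, so the bound becomes $\zeta(G) \ge (1-o(1))(\log d - 3\log\log n)/A$; the displayed inequality records the limiting constant $1/A$. For the upper bound I would use Theorem~\ref{theorem upper bound2}, whose natural form is $\zeta(G) \le (1+o(1))(\log d + 2\log\log n)\,\frac{e^{c}}{1-e^{-c}}$ a.a.s. This is the uniform counterpart of the $c \to 0$ bound $(1+o(1))(\log d + 2\log\log n)\frac{n}{d^i}$ from Theorem~\ref{thm:main}, since $\frac{e^{c}}{1-e^{-c}} \sim 1/c = n/d^i$ as $c \to 0$. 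Because $x \mapsto \frac{e^{x}}{1-e^{-x}}$ is continuous and positive on $\R_+$ and $c \to A$, we get $\frac{e^{c}}{1-e^{-c}} = (1+o(1))\frac{e^{A}}{1-e^{-A}}$, which yields the displayed upper bound.

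For the final consequence I would argue as follows. If $d \ge (\log n)^{3+\epsilon}$ then $\log d \ge (3+\epsilon)\log\log n$, whence $3\log\log n \le \frac{3}{3+\epsilon}\log d$ and $2\log\log n \le \frac{2}{3+\epsilon}\log d$; consequently both $\log d - 3\log\log n$ and $\log d + 2\log\log n$ are $\Theta(\log d)$. Since $A$ is a fixed positive real, the factors $1/A$ and $\frac{e^A}{1-e^{-A}}$ are positive constants, and $\frac{n}{d^i} = 1/c = \Theta(1)$; hence both sides of the two-sided bound are $\Theta(\log d) = \Theta\!\left(\frac{n\log d}{d^i}\right)$, giving $\zeta(G) = \Theta\!\left(\frac{n\log d}{d^i}\right)$.

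The genuine difficulty is not in this corollary but in Theorem~\ref{theorem upper bound2}, and specifically in identifying $\frac{e^{c}}{1-e^{-c}}$ (rather than $1/c$) as the correct leading factor once $d^i = \Theta(n)$. The point is saturation: when $c = \Theta(1)$, a ball of radius $i$ about a probe vertex already contains a constant fraction $1-e^{-c}$ of all vertices, so probes no longer ``cover'' the graph almost disjointly as they do when $c \to 0$. Quantifying how many probes are needed both to separate the $\asymp n e^{-c}$ vertices lying outside a given ball and to resolve vertices inside overlapping balls is what forces the $\frac{e^{c}}{1-e^{-c}}$ factor, and this is where I expect the bulk of the work of the underlying theorem to sit; by contrast the matching $1/A$ lower bound rests on a collision/counting argument over distance profiles and is comparatively robust.
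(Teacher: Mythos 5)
Your proposal matches the paper's own derivation: the paper states that Theorem~\ref{thm:main2} follows immediately from Theorem~\ref{theorem lower bound} and Theorem~\ref{theorem upper bound2}(i), and your substitution $n/d^i = 1/c \to 1/A$ together with the continuity of $x \mapsto e^x/(1-e^{-x})$ and the observation that $\log d \pm O(\log\log n) = \Theta(\log d)$ when $d \ge (\log n)^{3+\epsilon}$ is exactly the intended (elementary) deduction. The only unstated detail is that the index $i$ fixed by $c \to A$ coincides with the $i$ appearing in those two theorems (largest $i$ with $d^i \ll n\log\log n$, resp.\ $d^i/n \le 3\log n$), which is immediate since $d^{i+1}/n = cd \gg \log^3 n$.
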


\section{Notation and Probabilistic Preliminaries}\label{sec:notation}

In this section we give a few preliminary results that will be useful for the proof of our main result. First, we introduce standard asymptotic notation, then we state a specific instance of Chernoff's bound that we will find useful. Finally, we mention some specific expansion properties that $\Gnp$ has and state the well-known result about the diameter of $\Gnp$.

\subsection{Notation and Convention}\label{sub:notation}
 
Given two functions $f=f(n)$ and $g=g(n)$, we will write $f=O(g)$ if there exists an absolute constant $\alpha$ such that $f
\leq \alpha \cdot g$ for all $n$, $f=\Omega(g)$ if $g=O(f)$, $f=\Theta(g)$ if $f=O(g)$ and $f=\Omega(g)$, and we write $f=o(g)$ or $f\ll g$ if the limit $\lim_{n\to\infty} f/g=0$. In addition, we write $f=\omega(g)$ or $f\gg g$ if $g=o(f)$, and unless otherwise specified, $\omega$ will denote an arbitrary function that is $\omega(1)$, assumed to grow slowly. We also will write $f\sim g$ if $f=(1+o(1))g$. 

For a vertex $v \in V$ of some graph $G=(V,E)$, let $\Ss(v,i)$ and $\Nn(v,i)$ denote the set of vertices at distance exactly $i$ from $v$ and the set of vertices at distance at most $i$ from $v$, respectively. For any $V' \subseteq V$, let $\Ss(V',i) = \bigcup_{v \in V'} \Ss(v,i)$ and $\Nn(V',i) = \bigcup_{v \in V'} \Nn(v,i)$.

Through the paper, all logarithms with no subscript denoting the base will be taken to be natural. Finally, as typical in the field of random graphs, for expressions that clearly have to be an integer, we round up or down but do not specify which: the choice of which does not affect the argument.

\subsection{Concentration inequalities}

Throughout the paper, we will be using the following concentration inequality. Let $X \in \textrm{Bin}(n,p)$ be a random variable with the binomial distribution with parameters $n$ and $p$. Then, a consequence of Chernoff's bound (see e.g.~\cite[Corollary~2.3]{JLR}) is that 
\begin{equation}\label{eq:chern}
\Prob( |X-\E X| \ge \eps \cdot \E X) ) \le 2\exp \left( - \frac {\eps^2  \cdot \E X}{3} \right)  
\end{equation}
for  $0 < \eps < 3/2$. 

\subsection{Expansion properties}

In this paper, we focus on dense random graphs, that is, graphs with average degree asymptotic to $d := p \cdot n \gg \log n$. Such dense random graphs will have some useful expansion properties that hold $\aas$.
We will use the following two technical lemmas. The first one is proven in~\cite{metric_dimension} but we include the proof for completeness. 

\begin{lemma}[\cite{metric_dimension}]\label{lem:gnp exp}
Let $\omega=\omega(n)$ be a function tending to infinity with $n$ such that $\omega \le (\log n)^4 (\log \log n)^2$.  Then the following properties hold a.a.s.\ for $G=(V,E) \in \G(n,p)$.
Suppose that $\omega \cdot \log n \le d:=p \cdot n = o(n)$. Let $V' \subseteq V$ with $|V'| \le 2$ and let $i=i(n) \in \N$ be such that $d^i = o(n)$. Then,
$$
\left| \Ss(V',i) \right| = \left(1+O \left( \frac {1}{\sqrt{\omega}}\right) + O\left( \frac{d^i}{n} \right) \right) d^i |V'|.
$$
In particular, for every $x,y \in V$ ($x \neq y$) we have that
$$
\left| \Ss(x,i) \setminus \Ss(y,i) \right| = \left(1+O \left( \frac {1}{\sqrt{\omega}}\right) + O\left( \frac{d^i}{n} \right) \right) d^i.
$$
\end{lemma}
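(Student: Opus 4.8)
The plan is to explore the neighbourhoods of $V'$ by breadth-first search and track the layer sizes $N_j := |\Ss(V',j)|$ through a layer-by-layer concentration argument, and then to deduce the ``in particular'' statement by inclusion--exclusion. Write $M_j := |\Nn(V',j)| = \sum_{k=0}^{j} N_k$ and let $\mathcal{F}_j$ record the exploration up to and including layer $j$, so that $N_0 = |V'|$. Conditioned on $\mathcal{F}_j$, each of the $n - M_j$ vertices not yet reached joins layer $j+1$ independently with probability $q_j := 1-(1-p)^{N_j}$, whence $N_{j+1} \mid \mathcal{F}_j \sim \Bin(n-M_j,\, q_j)$. The first task is to show that $\E[N_{j+1}\mid\mathcal{F}_j] = d\, N_j\,(1+O(d^i/n))$: since $N_j = O(d^i) = o(n)$ we have $pN_j = O(d^{j+1}/n) = o(1)$, so Taylor expanding $(1-p)^{N_j}$ gives $q_j = pN_j(1+O(d^i/n))$, while $M_j/n = O(d^i/n)$ gives $n - M_j = n(1+O(d^i/n))$; multiplying and using $np = d$ yields the claim.

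Next I would set up an induction on $j$, conditioning on the event that every layer up to $j$ has concentrated within its claimed window, and apply the conditional Chernoff bound \eqref{eq:chern} to the binomial $N_{j+1}$. With conditional mean $\mu_{j+1} = \Theta(d^{j+1}|V'|)$, choosing relative deviation $\eps_{j+1} = \Theta(\sqrt{\log n / \mu_{j+1}})$ makes the per-layer failure probability $O(n^{-4})$; since there are $O(\log n)$ layers and only $O(n^2)$ sets $V'$ with $|V'|\le 2$, a union bound keeps the total failure probability $o(1)$, which is what the $\aas$ conclusion requires. Writing $N_i = |V'|\,d^i \prod_{j}(1+\text{err}_j)$, the accumulated multiplicative error splits into a Chernoff part and a saturation part. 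The Chernoff relative errors sum to $\sum_{j\ge 1}\eps_j = \Theta(\sqrt{\log n})\sum_{j\ge1} d^{-j/2}$, a geometric series dominated by its \emph{first} term, hence $O(\sqrt{\log n/d}) = O(1/\sqrt{\omega})$ by the hypothesis $d \ge \omega\log n$; the saturation errors sum to $\sum_{j} O(d^{j+1}/n)$, a geometric series dominated by its \emph{last} term, hence $O(d^i/n)$. Together these give $N_i = (1+O(1/\sqrt{\omega})+O(d^i/n))\,d^i|V'|$.

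The main obstacle is precisely this error bookkeeping. A careless estimate would incur a spurious factor of $i$ from the $i$ layers, which is fatal once $i \to \infty$, so the argument must exploit the geometric growth of the layers (common ratio $d \to \infty$): the smallest, first layer controls the relative Chernoff error, while the largest, last layer controls the saturation error. I would therefore carry an explicit running error through the induction and verify that each geometric series closes inside the stated $O(1/\sqrt{\omega}) + O(d^i/n)$ budget; one also checks $\eps_j < 3/2$ so that \eqref{eq:chern} applies, which holds since already $\eps_1 = O(1/\sqrt{\omega})$.

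Finally, for the ``in particular'' part with $x\neq y$, I would apply the main estimate to the three source sets $\{x\}$, $\{y\}$, and $\{x,y\}$. Because $\Ss(\{x,y\},i) = \Ss(x,i)\cup\Ss(y,i)$, inclusion--exclusion gives $|\Ss(x,i)\cap\Ss(y,i)| = |\Ss(x,i)| + |\Ss(y,i)| - |\Ss(\{x,y\},i)|$, where the leading terms $2d^i$ cancel and only the error terms survive, so the intersection is $O\big((1/\sqrt{\omega}+d^i/n)\,d^i\big)$. Subtracting it from $|\Ss(x,i)| = (1+O(1/\sqrt{\omega})+O(d^i/n))d^i$ then yields $|\Ss(x,i)\setminus\Ss(y,i)| = (1+O(1/\sqrt{\omega})+O(d^i/n))\,d^i$, as claimed.
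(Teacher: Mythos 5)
Your proposal is correct and takes essentially the same approach as the paper: a layer-by-layer exposure of the neighbourhoods with a conditional Chernoff bound per layer, a union bound over the $O(n^2)$ choices of $V'$ and the $O(\log n)$ layers, and the key observation that the accumulated relative errors form geometric series dominated by the first layer (Chernoff term, giving $O(\sqrt{\log n/d})=O(1/\sqrt{\omega})$) and the last layer (saturation term, giving $O(d^i/n)$), with the ``in particular'' statement obtained by the same deterministic inclusion--exclusion on $\{x\}$, $\{y\}$, $\{x,y\}$ that the paper invokes. The only cosmetic difference is that you track the spheres $\Ss(V',j)$ directly while the paper iterates on the balls $\Nn(V',j)$; both yield the stated estimate.
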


For the next lemma we need to assume that our random graph is slightly denser, namely, that $d := p \cdot n \gg \log^3 n$.

\begin{lemma}\label{lem:gnp exp2}
Let $\omega'=\omega'(n)$ be a function tending to infinity with $n$ such that $\omega' \le (\log n)^2 (\log \log n)^2$.  Then the following properties hold a.a.s.\ for $G=(V,E) \in \G(n,p)$.
Suppose that $\omega' \log^3 n \le d:=p \cdot n = o(n)$. Suppose that $i=i(n) \in \N$ is such that $c = c(n) := d^i/n = \Omega(1)$ and $c \le 3 \log n$. Then, for every $x,y \in V$ ($x \neq y$) we have that
$$
\left| \Ss(x,i) \setminus \Ss(y,i) \right| = \left( 1 + O \left( \frac {1}{\sqrt{\omega'}} \right) \right) n (1-e^{-c}) e^{-c},
$$
provided $c \le \log n - 4 \log \log n$. For $\log n - 4 \log \log n \le c \le 3 \log n$, we have that
$$
\left| \Ss(x,i) \setminus \Ss(y,i) \right| = O \left( \log^4 n \right).
$$
\end{lemma}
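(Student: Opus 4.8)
The plan is to reduce the whole statement to a single round of neighbourhood exposure on top of Lemma~\ref{lem:gnp exp}. First note that $c=\Omega(1)$ together with $d=o(n)$ forces $i\ge 2$: if $i=1$ then $c=d/n=o(1)$. Hence $d^2\le d^i=cn\le 3n\log n$, so $d=O(\sqrt{n\log n})$ and $d/n=O(\sqrt{\log n/n})$; this is what will make all stray error terms of the form $cd/n$, $c^2/d$, $c^2/d^2$ collapse into $o(1/\sqrt{\omega'})$, and I will use it repeatedly. I would apply Lemma~\ref{lem:gnp exp} with $\omega:=\omega'\log^2 n$ (legal, since $\omega\log n=\omega'\log^3 n\le d$ and $\omega\le\log^4 n(\log\log n)^2$) at every level $j\le i-1$ and every vertex simultaneously. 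As $d^{i-1}=cn/d=o(n)$, this gives $\aas$, for all $v$, that $|\Ss(v,i-1)|=(1+O(1/(\sqrt{\omega'}\log n)))d^{i-1}$ and $|\Nn(v,i-1)|=O(d^{i-1})=o(n)$. Condition on this event and write $\mathcal F$ for the $\sigma$-field generated by the breadth-first searches from $x$ and from $y$ run up to level $i-1$.

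Next I would expose the final level. A vertex lies in $\Ss(x,i)$ exactly when it is outside $\Nn(x,i-1)$ and has a neighbour in $\Ss(x,i-1)$, and symmetrically for $y$. Fix $u\notin\Nn(x,i-1)\cup\Nn(y,i-1)$ (all but $o(n)$ vertices). Given $\mathcal F$, the edges from $u$ to $\Ss(x,i-1)\cup\Ss(y,i-1)$ are unexposed, apart from a few already revealed during the $y$-search (an edge $\{u,a\}$ with $a\in\Ss(x,i-1)\cap\Nn(y,i-2)$) or the $x$-search. Crucially, every such pre-exposed edge is a \emph{non}-edge, since otherwise $u$ would have been reached before level $i$; thus it only shrinks the effective neighbourhood of $u$ in $\Ss(x,i-1)$ by $|\Ss(x,i-1)\cap\Nn(y,i-2)|=O(d^{i-1}d^{i-2}/n)$, which changes $(1-p)^{|\Ss(x,i-1)|}$ by a factor $e^{O(c^2/d^2)}=1+o(1/\sqrt{\omega'})$. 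Hence
\[
\Prob\big(u\in\Ss(x,i)\setminus\Ss(y,i)\mid\mathcal F\big)=(1-p)^{|\Ss(y,i-1)|}-(1-p)^{|\Ss(x,i-1)\cup\Ss(y,i-1)|}.
\]
Using $(1-p)^{d^{i-1}}=\exp(-pd^{i-1}(1+O(p)))=e^{-c}(1+O(1/\sqrt{\omega'}))$ and the overlap bound $p\,|\Ss(x,i-1)\cap\Ss(y,i-1)|=O(c^2/d)=o(1/\sqrt{\omega'})$, this probability equals $(1+O(1/\sqrt{\omega'}))(1-e^{-c})e^{-c}$. For distinct $u$ these events depend on disjoint, unexposed edge-sets $\{u\}\times(\Ss(x,i-1)\cup\Ss(y,i-1))$, so given $\mathcal F$ they are independent; therefore $|\Ss(x,i)\setminus\Ss(y,i)|$ is a sum of independent indicators with mean $\mu=(1+O(1/\sqrt{\omega'}))\,n(1-e^{-c})e^{-c}$.

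In the first regime I would finish with the Chernoff bound~\eqref{eq:chern}: taking $\eps=1/\sqrt{\omega'}$ the deviation probability is at most $2\exp(-\mu/(3\omega'))$. Here $c=\Omega(1)$ gives $1-e^{-c}=\Omega(1)$, while $c\le\log n-4\log\log n$ gives $e^{-c}\ge(\log n)^4/n$ and hence $\mu=\Omega(\log^4 n)$ (this is precisely why the threshold carries the constant $4$). Since $\omega'\log n\le\log^3 n(\log\log n)^2$ and $\log^4 n\gg\log^3 n(\log\log n)^2$, we get $\mu/\omega'\gg\log n$, and a union bound over the fewer than $n^2$ pairs yields the claimed estimate $\aas$. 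In the second regime, $c\ge\log n-4\log\log n$ gives $\mu\le ne^{-c}\le\log^4 n$, so an upper-tail estimate such as $\Prob(\Bin(N,q)\ge K\log^4 n)\le(e\mu/(K\log^4 n))^{K\log^4 n}$ forces $|\Ss(x,i)\setminus\Ss(y,i)|=O(\log^4 n)$ $\aas$, again after a union bound over all pairs.

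The main obstacle is exactly the dependence between the two explorations hidden in the displayed identity, i.e.\ justifying that conditioning on $\mathcal F$ leaves the edges from each $u$ to $\Ss(x,i-1)\cup\Ss(y,i-1)$ essentially fresh and making the overlap corrections uniform over all $\binom{n}{2}$ pairs. The observation that the pre-exposed edges are necessarily non-edges is what tames this, but one still has to bound $|\Ss(x,i-1)\cap\Nn(y,i-2)|$ and $|\Ss(x,i-1)\cap\Ss(y,i-1)|$ $\aas$ for all pairs (via the quasi-randomness underlying Lemma~\ref{lem:gnp exp}) and verify that the induced perturbation of $\mu$ is genuinely $o(\mu/\sqrt{\omega'})$; this uniform control is the technical heart of the argument.
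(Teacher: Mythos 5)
Your proposal is correct and follows essentially the same route as the paper: expose the neighbourhoods up to level $i-1$ via Lemma~\ref{lem:gnp exp} with $\omega=\omega'\log^2 n$, observe that conditionally $|\Ss(x,i)\setminus\Ss(y,i)|$ is a sum of independent indicators with mean $(1+O(1/\sqrt{\omega'}))\,n(1-e^{-c})e^{-c}$, and finish with Chernoff plus a union bound over pairs (with a cruder upper-tail bound in the regime $c\ge \log n-4\log\log n$). Your explicit handling of the pre-exposed (necessarily non-)edges into $\Nn(\{x,y\},i-2)$ is a point the paper glosses over, but it is a refinement of the same argument rather than a different one, and the trivial bound $|\Ss(x,i-1)\cap\Nn(y,i-2)|\le|\Nn(y,i-2)|=O(d^{i-2})$ already suffices to make that correction negligible.
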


\begin{proof}[Proof of Lemma~\ref{lem:gnp exp}]
We will show that a.a.s.\ for every $V' \subseteq V$ with $|V'| \le 2$ and $i \in \N$, we have the desired concentration for $|\Ss(V',i)|$, provided that $d^i = o(n)$. The statement for each pair of distinct vertices $x,y$ then follows immediately via a deterministic argument.

In order to investigate the expansion property of neighbourhoods, let $Z \subseteq V$, $z=|Z|=o(n/d)$, and consider the random variable $X = X(Z) = |\Nn(Z,1)|$. We will bound $X$ stochastically. There are two things that need to be estimated: the expected value of $X$, and the concentration of $X$ around its expectation.

Since for $x=o(1)$ we have that $(1-x)^z = e^{-x \cdot z \cdot (1+O(x))}$ and $e^{-x}=1-x+O(x^2)$, it is clear that
\begin{eqnarray}
\E [X] &=& n - \left(1- \frac {d}{n} \right)^z (n-z) \nonumber \\
&=& n - \exp \left( - \frac {dz}{n} (1+O(d/n)) \right) (n-z) \nonumber \\
&=& d \cdot z \cdot (1+O(dz/n)), \label{eq:expX}
\end{eqnarray}
provided $d \cdot z = o(n)$. It follows from Chernoff's bound~(\ref{eq:chern}), applied with $\eps = 2/{\sqrt{\omega}}$, that the expected number of sets $V'$ with $|V'|\leq 2$ satisfying  
\[
\big| |\Nn(V',1)| - \E\left[\,|\Nn(V',1)|\,\right] \big| > \eps d|V'|
\] 
is at most 
\[
\sum_{z \in \{1,2\}} 2 n^z \exp \left( - \frac {\eps^2 z d }{3+o(1)} \right) \le \sum_{z \in \{1,2\}} 2 n^z \exp \left( - \frac {\eps^2 z \omega \log n}{3+o(1)} \right) = o(1),
\]
since $d \geq \omega \log n$. Hence the statement holds $\aas$ for $i=1$. 

We now estimate the cardinalities of $\Nn(V',i)$ up to the $i^{th}$ iterated neighbourhood, provided $d^i = o(n)$ and thus $i = O(\log n /\log \log n)$. It follows from~(\ref{eq:expX}) and~(\ref{eq:chern}) (with $\eps = 4 (\omega \cdot |Z|)^{-1/2}$) that 
with probability at least $1-n^{-3}$ it holds that
$$
|\Nn(Z,1)| = d \cdot |Z| \left( 1+ O \left( d|Z|/n \right) + O\left((\omega |Z|)^{-1/2} \right) \right),
$$
assuming $\omega \log n / 2 \le |Z| = o(n/d)$. We emphasize that the bounds in $O()$ are uniform. As we want a result that holds a.a.s., we may assume this statement holds deterministically, since there are only $O(n^2 \log n)$ choices for $V'$ and $i$. Given this assumption, we have good bounds on the ratios of the cardinalities of $\Nn(V',1)$, $\Nn(\Nn(V',1),1) = \Nn(V',2)$, and so on. Since $i=O(\log n / \log \log n)$ and $\sqrt{\omega} \le (\log n)^2 (\log \log n)$, the cumulative multiplicative error term is
\begin{align*}
(1+&O(d/n) + O(1/\sqrt{\omega})) \prod_{j=2}^i \left( 1+ O \left( d^j/n \right) + O\left( \omega^{-1/2} d^{-(j-1)/2} \right) \right) \\
&= (1+O(1/\sqrt{\omega}) + O(d^i/n) )  \prod_{j=7}^{i-3} \left( 1+ O \left( \log^{-3} n \right) \right) = (1+O(1/\sqrt{\omega}) + O(d^i/n) ),
\end{align*}
and the proof is complete.
\end{proof}

\begin{proof}[Proof of Lemma~\ref{lem:gnp exp2}]
Fix any $x,y\in V$ ($x \neq y$). Since $d=o(n)$ and $d^i = \Omega(n)$, it follows that $i \ge 2$. We expose edges around vertices $x$ and $y$ to get $\Nn(\{x,y\},i-1)$. Note that $d^{i-1} = d^i/d = cn/d = O(n \log n / d) = O(n / (\omega' \log^2 n)) = o(n)$. Hence, by Lemma~\ref{lem:gnp exp} applied with $\omega = \omega' \log^2 n$, we may assume that
\begin{eqnarray*}
\left| \Ss(x,i-1) \setminus \Ss(y,i-1) \right| &=& \left(1+O \left( \frac {1}{\sqrt{\omega}}\right) + O\left( \frac{d^{i-1}}{n} \right) \right) d^{i-1} \\
&=& \left(1+O \left( \frac {1}{\sqrt{\omega'} \log n}\right) + O\left( \frac{1}{\omega' \log n} \right) \right) d^{i-1} \\
&=& \left(1+O \left( \frac {1}{\sqrt{\omega'} \log n}\right) \right) d^{i-1}.
\end{eqnarray*}
Similarly,
$$
\left| \Ss(y,i-1) \right| = \left(1+O \left( \frac {1}{\sqrt{\omega'} \log n}\right) \right) d^{i-1}.
$$
Let $X = X(x,y) = \left| \Ss(x,i) \setminus \Ss(y,i) \right|$. It is clear that $v \in V \setminus \Nn(\{x,y\},i-1)$ belongs to $\Ss(x,i) \setminus \Ss(y,i)$ if and only if $v$ has a neighbour in $\Ss(x,i-1) \setminus \Ss(y,i-1)$ but has no neighbour in $\Ss(y,i-1)$. It follows that
\begin{eqnarray*}
\E [X] &=& \Big( n - |\Nn(\{x,y\},i-1)| \Big) \left( 1 - (1-p)^{\left| \Ss(x,i-1) \setminus \Ss(y,i-1) \right|} \right) (1-p)^{\left| \Ss(y,i-1) \right|}.
\end{eqnarray*}
Since 
\begin{eqnarray*}
(1-p)^{\left(1+O \left( \frac {1}{\sqrt{\omega'} \log n}\right) \right) d^{i-1}} &=& \exp \left( - \left(1+O \left( \frac {1}{\sqrt{\omega'} \log n}\right) \right) \frac {d^{i}}{n} \right) \\
&=& \exp \left( - c + O \left( \frac {c}{\sqrt{\omega'} \log n}\right) \right) \\
&=& e^{-c} \exp \left( O \left( \frac {1}{\sqrt{\omega'}} \right) \right) \\
&=& e^{-c} \left( 1 + O \left( \frac {1}{\sqrt{\omega'}} \right) \right),
\end{eqnarray*}
we get that
\begin{eqnarray*}
\E [X] &=& \left( 1 + O \left( \frac {d^{i-1}}{n} \right) \right) n (1-e^{-c}) e^{-c} \left( 1 + O \left( \frac {1}{\sqrt{\omega'}} \right) \right) \\
&=& \left( 1 + O \left( \frac {1}{\sqrt{\omega'}} \right) \right) n (1-e^{-c}) e^{-c}.
\end{eqnarray*}

Suppose first that $c \le \log n - 4 \log \log n$ so that $\E [X] \ge (1+o(1)) \log^4 n.$ 
It follows from Chernoff's bound~(\ref{eq:chern}), applied with $\eps = 1/{\sqrt{\omega'}} \ge (\log n)^{-1} (\log \log n)^{-1}$, that 
$$
X = \left( 1 + O \left( \frac {1}{\sqrt{\omega'}} \right) \right) n (1-e^{-c}) e^{-c}
$$ 
with probability at least 
$$ 
1 - \exp\Big( - \Theta(\eps^2 \E[X]) \Big) = 1 - \exp \Big( - \Omega( (\log n)^2 / (\log \log n)^2) \Big) = 1-o(n^{-2}).
$$
The desired property holds by the union bound taken over all pairs $x,y$.

For $\log n - 4 \log \log n < c \le 3 \log n$, we have $\E[X] \le (1+o(1)) \log^4 n$. We may couple the binomial random variable $X$ with another random variable $Y \ge X$ with expectation equal to $(1+o(1)) \log^4 n$. Then, we may use Chernoff's bound~(\ref{eq:chern}) with, say, $\eps = 1$ to get that with the desired probability $X \le Y \le (2+o(1)) \log^4 n$. (Alternatively, one could use a more general version of Chernoff's bound that allows $\eps \ge 3/2$.) The desired bound for $X=X(x,y)$ holds \aas\ for all pairs of $x,y$.
\end{proof}

\begin{remark}\label{remark_cond}
Let us indicate a high level overview of how we are going to apply Lemma~\ref{lem:gnp exp} (or other properties of $\G(n,p)$ that hold $\aas$) throughout the paper. This is a standard technique in the theory of random graphs but it is quite delicate. We wish to use the expansion properties guaranteed $\aas$ by Lemma~\ref{lem:gnp exp}, but we also wish to avoid working in a conditional probability space, as doing so would make the necessary probabilistic computations intractable.
Thus, we will work in the unconditional probability space of $\G(n,p)$, but we will provide an argument which assumes $\G(n,p)$ 
has the expansion properties of Lemma~\ref{lem:gnp exp}. Since these properties hold $\aas$, the probability of the set of outcomes in which our argument does not apply to is $o(1)$, and thus can be safely excised at the end of the argument. We provide more details as they become relevant in our specific applications
of this technique, namely, in Lemma~\ref{lemma diametrically opposed}.


\end{remark}

\subsection{Diameter of $\Gnp$}

We will use the following well-known result.

\begin{lemma}[\cite{bol}, Corollary 10.12]\label{lem:diameter}
Suppose that $d := p \cdot n \gg \log n$ and 
\[
d^{i+1}/n - 2 \log n \to \infty \text{ \ \ \ \ and \ \ \ \ } d^{i}/n - 2 \log n \to -\infty.
\]
Then the diameter of $G \in \G(n,p)$ is equal to $i+1$ a.a.s.
\end{lemma}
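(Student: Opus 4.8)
The plan is to establish the two inequalities $\mathrm{diam}(G) \le i+1$ and $\mathrm{diam}(G) \ge i+1$ separately, by applying the first and second moment methods to the number of vertex pairs that lie far apart. In both cases I would expose the graph by breadth-first search from a fixed vertex $u$ and use the expansion estimates of Lemma~\ref{lem:gnp exp} (and, where the density permits, Lemma~\ref{lem:gnp exp2}) to control the sphere sizes $|\Ss(u,j)|$. The crucial preliminary observation is that the hypothesis $d^{i}/n - 2\log n \to -\infty$ together with $d \gg \log n$ forces $d^{i-1}/n \le 2\log n / d = o(1)$, so the ball $\Nn(u,i-1)$ is never saturated and Lemma~\ref{lem:gnp exp} applies all the way up to radius $i-1$, giving $|\Ss(u,i-1)| = (1+o(1))d^{i-1}$ a.a.s. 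This is precisely the regime in which the one-step expansion to the last layer can be analysed cleanly.

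For the upper bound I would bound the expected number $\E[Z]$ of ordered pairs $(u,v)$ with $\mathrm{dist}(u,v) > i+1$. Fixing $u$ and exposing $\Ss(u,i)$, a vertex $v \notin \Nn(u,i)$ fails to lie in $\Nn(u,i+1)$ exactly when it has no neighbour in $\Ss(u,i)$, an event of probability $(1-p)^{|\Ss(u,i)|}$. When $d^i = o(n)$, Lemma~\ref{lem:gnp exp} at radius $i$ gives $|\Ss(u,i)| = (1+o(1))d^i$, so this probability is $\exp(-(1-o(1))d^{i+1}/n)$ and hence $\E[Z] = \exp\big(2\log n - (1-o(1))d^{i+1}/n\big) \to 0$ by the hypothesis $d^{i+1}/n - 2\log n \to \infty$. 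When instead $d^i/n = \Omega(1)$, the same expansion (applied at the largest radius below $i$ where the ball is still $o(n)$, followed by a Chernoff step) shows $|\Ss(u,i)| = \Omega(n)$, so the probability is $\exp(-\Omega(d)) = n^{-\omega(1)}$ since $d \gg \log n$, which already beats the $n^2$ union bound. In either case Markov's inequality yields $\mathrm{diam}(G) \le i+1$ a.a.s.

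For the lower bound I would show that a.a.s.\ some pair lies at distance exceeding $i$. Let $W$ count unordered pairs $\{u,v\}$ with $\mathrm{dist}(u,v) > i$. Exposing $\Ss(u,i-1)$ as above, the event $v \notin \Nn(u,i)$ amounts to $v$ having no neighbour in $\Ss(u,i-1)$ (the probability that $v \in \Nn(u,i-1)$ being $o(1)$, since $d^{i-1}=o(n)$), so it has probability $(1+o(1))(1-p)^{|\Ss(u,i-1)|} = (1+o(1))e^{-d^i/n}$. Hence $\E[W] = \tfrac12\exp\big(2\log n - d^i/n + o(1)\big) \to \infty$ by the hypothesis $d^i/n - 2\log n \to -\infty$. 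To promote this divergent expectation to an almost-sure existence statement I would run the second moment method: estimate $\E[W^2]$ by splitting the sum over pairs of pairs according to how many vertices they share, show that vertex-disjoint quadruples dominate and contribute $(1+o(1))\E[W]^2$ while overlapping configurations are of strictly lower order, and conclude $W>0$ a.a.s.\ from Chebyshev's inequality. Combining the two bounds gives $\mathrm{diam}(G) = i+1$ a.a.s.

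The main obstacle is the second moment estimate. The events $\{\mathrm{dist}(u,v)>i\}$ and $\{\mathrm{dist}(u',v')>i\}$ are positively correlated, so one must argue that after exposing the BFS balls around $u$ and $u'$ the relevant conditional probabilities change only by a $1+o(1)$ factor; this is most delicate in the nearly-saturated regime $d^i/n = \Theta(\log n)$, where the spheres already occupy a constant fraction of $V$ and the independence heuristic is weakest. Since this lemma is exactly \cite[Corollary~10.12]{bol}, a clean alternative is simply to invoke the quoted two-moment computation, with the expansion estimates above supplying the only inputs needed to verify its hypotheses throughout our density range.
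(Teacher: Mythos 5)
The paper offers no proof of this lemma: it is imported verbatim as Corollary~10.12 of Bollob\'as~\cite{bol} and used as a black box, so there is no in-paper argument to compare yours against. Your outline is the standard first/second-moment proof that underlies the cited result, and as a roadmap it is essentially the right one; your closing remark that one may simply invoke the citation is exactly what the authors do.

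Two steps in your sketch are, however, thinner than a self-contained proof would require. First, in both moment computations you push a $(1+o(1))$ \emph{relative} error on $|\Ss(u,i)|$ (resp.\ $|\Ss(u,i-1)|$) into the exponent of $(1-p)^{|\Ss(u,\cdot)|}$, where it becomes an \emph{additive} error of size $o(d^{i+1}/n)$ (resp.\ $o(d^{i}/n)$). The hypotheses only guarantee that $d^{i+1}/n-2\log n\to\infty$ and $d^{i}/n-2\log n\to-\infty$ with margins that may diverge arbitrarily slowly, while $d^{i+1}/n$ and $d^{i}/n$ are themselves of order $\log n$ in the critical regime; an unquantified $o(\cdot)$ can therefore swallow the entire margin, so that $\exp\bigl(2\log n-(1-o(1))d^{i+1}/n\bigr)$ need not tend to $0$ and $\exp\bigl(2\log n-(1+o(1))d^{i}/n\bigr)$ need not tend to $\infty$. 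One must track the relative error quantitatively --- with the bounds of Lemma~\ref{lem:gnp exp} it is $O(1/\sqrt{\omega})+O(d^{i-1}/n)$ with $\omega=d/\log n$, which is $O(1/\log n)$ only once $d\gtrsim\log^{3}n$ --- so in the range $\log n\ll d\ll\log^{3}n$ the expansion lemmas of this paper do not by themselves close the argument, and one needs the sharper sphere estimates of the cited proof. Second, the second-moment computation, which you correctly identify as the delicate part, is asserted rather than carried out; the positive correlation of the events $\{\mathrm{dist}(u,v)>i\}$ in the nearly saturated regime $d^{i}/n=\Theta(\log n)$ is precisely where the overlap analysis has real content. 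Neither point makes your approach wrong --- it is the textbook route --- but they are the reason the authors cite the result rather than reprove it, and your fallback to the citation is the appropriate resolution.
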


\section{Definition and Reformulation of the Localization Game}\label{sec:perfect}

In this section, we will first provide a precise definition of the localization game, and then provide a reformulation of the game that will be easier to deal with when proving our results.

\subsection{Definition of the Localization Game}\label{section definition of the game}

Let $G=(V,E)$ be a connected graph. Given a set $S\subseteq V$ of size $k$, $S=\{s_1,s_2,\dots,s_k\}$, and a vertex $v\in V$, we say the $S$-\emph{signature} of $v$ is the vector $\mathbf{d}=\mathbf{d}(S,v)=(d_1,d_2,\dots,d_k)$ where $d_i=d(s_i,v)$ is the distance from $s_i$ to $v$ for each $1\leq i\leq k$. Then the \emph{localization game with $k$ sensors} is a game played with two players, the \emph{cops} and the \emph{invisible robber}. In the first round, the cops choose a set $S_1\subseteq V$, $|S_1|=k$ (called the \emph{sensor locations}), the robber chooses any vertex $v_1\in V$, and then the cops receive the $S_1$-signature of $v_1$, say $\mathbf{d}_1$. If the $S_1$-signature of $v_1$ is sufficient to determine the location of the robber, the cops win, otherwise the game continues to the next round. Then, in round $i$, the cops choose a new set $S_i\subseteq V$, and the robber chooses a vertex $v_i\in \mathcal{N}(v_{i-1},1)$ as her new location, and the cops learn the $S_i$-signature of $v_i$, say $\mathbf{d}_i$. 

While playing the localization game, both the cops and the robber are aware of the underlying graph and all the previous cops' moves. However, the cops are not aware of the exact location of the robber, but the robber is aware of every move they have made. Thus, the robber has perfect information in the localization game, while the cops do not.

We call the sequence $(\mathbf{d}_1,\mathbf{d}_2,\dots,\mathbf{d}_i)$ the \emph{info trail of the walk $(v_1,v_2,\dots,v_i)$ with respect to sensor locations $(S_1,S_2,\dots,S_i)$}. Then the cops win in round $i$ if the info trail of the robber is sufficient to determine the location of the robber, and otherwise the game proceeds to round $i+1$. More precisely, the cops win in round $i$ if for every two walks (we will assume here that $G$ is reflexive so consecutive vertices in a walk can be equal), $W=(w_1,w_2,\dots,w_i)$ and $X=(x_1,x_2,\dots,x_i)$, both with info trail $(\mathbf{d}_1,\mathbf{d}_2,\dots,\mathbf{d}_i)$ with respect to $(S_1,S_2,\dots,S_i)$, we have $w_i=x_i$. 

The \emph{localization number} of the graph $G$, denoted $\zeta(G)$ is defined to be the least integer $k$ such that the cops can win the localization game with $k$ sensors in finite time, regardless of the strategy of the robber. It is worth noting that since the definition of the localization number requires the cops to be able to win regardless of the strategy of the robber, the fact that the robber has perfect information is somewhat arbitrary; regardless of the information given to the robber, $\zeta(G)$ does not change. 

\subsection{Reformulation of the Game with Perfect Information for the Cops}

Since the definition of localization number requires the cops to be able to win in finite time regardless of the strategy of the robber, we can view this problem equivalently as follows: when the cops choose $S_1$, we partition the vertex set $V$ into $R^1_{1}\cup R^1_{2}\cup \ldots \cup R^1_{\ell_1}$ such that the sets $R^1_{j}$ are the equivalence classes of vertices in $V$ that have the same $S_1$-signature for $1\leq j\leq \ell_1$. Then, instead of choosing a specific location, the robber can choose some equivalence class $R^1_{j_1}$. Then once the cops choose $S_2$, we partition the set $\Nn(R^1_{j_1},1)$ into equivalence classes $R^2_{1}\cup R^2_{2}\cup \ldots \cup R^2_{\ell_2}$ so that every vertex in $R^2_{j}$ has the same $S_2$-signature. Then the robber chooses a set $R^2_{j_2}$. Iteratively, in round $i$, once the cops choose $S_i$, this gives the partition $\Nn(R^{i-1}_{j_{i-1}},1)=R^i_{1}\cup R^i_{2}\cup \ldots \cup R^i_{\ell_i}$ with every vertex in $R^i_{j}$ having the same $S_i$ signature, then the robber chooses some $R^i_{j_i}$. In this version of the game, the cops win in round $i$ if the robber is forced to choose a set $R^i_{j_i}$ with only one vertex, that is, $|R^i_{j_i}|=1$. In this reformulation, both players have perfect information. In particular, it is a combinatorial game and so one of the players must have a winning strategy; that is, a
strategy which wins against all of the other player's strategies simultaneously.

It can be seen that these two formulations of the localization game are equivalent in the sense that if the robber performs the walk $(v_1,v_2,\dots,v_i)$ in response to sensor locations $(S_1,S_2,\dots,S_i)$, this is equivalent to the robber choosing sets $(R^1_{j_1},R^2_{j_2},\ldots,R^i_{j_i})$, and if there is enough information to determine that the robber is at $v_i$ at time $i$, it must be because $R^i_{j_i}=\{v_i\}$ has only one element. Conversely, if the robber chooses sets $(R^1_{j_1},R^2_{j_2},\dots,R^i_{j_i})$ in response to the cop choosing sensor locations $(S_1,S_2,\dots,S_i)$, then there exists at least one walk $(v_1,v_2,\dots,v_i)$ with $v_k\in R^k_{j_k}$ for each $1\leq k\leq i$, and if $|R^i_{j_i}|=1$, we must have $R^i_{j_i}=\{v_i\}$ and every walk that shares an info trail with $(v_1,v_2,\dots,v_i)$ must have terminal vertex $v_i$, so the cops locate the robber. Thus, the two formulations are equivalent in terms of the value of $\zeta(G)$, and throughout the rest of the paper we will work with this perfect information reformulation of the localization game.

Throughout the rest of the paper, when a robber chooses a set $R^i_{j_i}$ on turn $i$, we will denote that set simply by $R^i$ for the remainder of the game.

\section{Lower Bound}\label{sec:lower}

In this section, we prove our lower bound. Our upper bound of $o(n \log \log n)$ for $d^i$ is not best possible. For $d^i = \Omega(n)$ we do not manage to determine the asymptotic behaviour of $\zeta(G)$ and so we content ourselves with a slightly weaker bound. 

\begin{theorem}\label{theorem lower bound}
Suppose that $d:=p \cdot n$ is such that $\log n \ll d \ll n$. Let $i = i(n) \in \N$ be the largest integer $i$ such that $d^i \ll n \log \log n$.
Suppose that $d^{i+1}/n - 2 \log n \to \infty$.
Then the following holds a.a.s.\ for $G \in \Gnp$: 
\[
\zeta(G)\geq \left(\log d - 3\log\log n\right)\frac{n}{d^i}\,.
\]
\end{theorem}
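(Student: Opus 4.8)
The plan is to exhibit a robber strategy that survives arbitrarily long whenever the cop uses fewer than $(\log d - 3\log\log n)\,n/d^i$ sensors per round. The key structural observation, which drives the whole argument, is that in a graph of diameter $i+1$ the $S$-signature of a vertex $v$ is determined by which of the distances $d(s,v)$ equal each possible value in $\{0,1,\dots,i+1\}$. Since almost all pairs of vertices are at distance exactly $i+1$ (as $d^i \ll n$ forces $|\Nn(v,i)| = o(n)$ via Lemma~\ref{lem:gnp exp}), for a typical sensor $s$ and a typical vertex $v$ we have $d(s,v) = i+1$; the signature only carries real information through the comparatively few sensors that land within distance $i$ of $v$. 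I would make this precise: a sensor $s$ distinguishes two candidate robber locations $u,v$ only if it lies in the symmetric difference $\Nn(u,i)\,\triangle\,\Nn(v,i)$ (or sees them at differing smaller distances), and by the expansion estimate $|\Ss(x,i)\setminus\Ss(y,i)| = (1+o(1))d^i$ each sensor distinguishes a given pair with "probability" on the order of $d^i/n$.

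The heart of the argument is then a counting/entropy bound on how much a single round of $k$ sensors can shrink the robber's candidate set $R^{i-1}$. I would fix the robber's current equivalence class $R$ (a set of candidate locations that has so far evaded the cop) and bound, from below, the size of the largest $S$-signature class inside $\Nn(R,1)$. With $k$ sensors each at distance $\le i$ from only about $k\,d^i/n \cdot |R|$ of the relevant vertices, the number of distinct signatures realized is at most roughly $(i+2)^{k}$ in the crude bound, but the effective bound is much smaller: the vertices partition according to the pattern of \emph{which} sensors see them within distance $i$, and one shows via a pigeonhole/averaging argument that some signature class retains a $(1/d)^{\,k d^i/n}$-type fraction. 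Choosing the threshold $k = (\log d - 3\log\log n)\,n/d^i$ is exactly calibrated so that $k\cdot d^i/n = \log d - 3\log\log n$, and hence $d^{-k d^i/n}$ times the initial candidate count stays super-constant (indeed grows), guaranteeing that the surviving class always has at least two vertices. This keeps the robber alive forever, so $\zeta(G) > k$.

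Concretely, the steps I would carry out in order are: (i) invoke Lemma~\ref{lem:gnp exp} and Lemma~\ref{lem:diameter} to fix the \aas\ event that the diameter is $i+1$ and that the $i$-sphere sizes satisfy $|\Ss(x,i)\setminus\Ss(y,i)| \sim d^i$; (ii) reduce to the perfect-information reformulation and set up the robber as maintaining a large candidate class $R^{t}$; (iii) for a fixed sensor set $S$ of size $k$, bound the number of vertices of $\Nn(R,1)$ that any sensor can see within distance $i$, showing this total is at most $(1+o(1))k\,d^i\cdot|R|/n \cdot(\text{const})$, so that the "informative" sensor–vertex incidences are few; (iv) run the pigeonhole step to locate a signature class of size at least $|R|\cdot d^{-(1+o(1))k d^i/n}$ inside $\Nn(R,1)$; and (v) verify that with $k \le (\log d - 3\log\log n)n/d^i$ this surviving class has size $\ge 2$, so the robber never loses.

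The main obstacle I anticipate is step (iii)–(iv): making the counting rigorous without conditioning on the whole graph. The sensor locations $S$ are adaptive (chosen with knowledge of the graph), so one cannot treat the incidences as independent; the clean way around this is the device of Remark~\ref{remark_cond}, working in the unconditional space while assuming the \aas\ expansion bounds hold deterministically, and then taking a union bound over the $\binom{n}{k}$ possible sensor sets and over the robber's candidate classes. Controlling that union bound — ensuring $\binom{n}{k}$ times the failure probability is still $o(1)$ with the chosen $k$ — is where the precise constant $3\log\log n$ in the exponent is consumed, and getting the bookkeeping of the $o(1)$ and $O(1/\sqrt\omega)$ error terms to line up is the delicate part of the proof.
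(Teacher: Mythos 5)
Your high-level plan is the right one and matches the paper's in spirit: the robber plays greedily to stay at distance $i+1$ from all sensors, and the threshold $k=(\log d-3\log\log n)n/d^i$ is calibrated exactly so that the expected number of vertices diametrically opposite to all $k$ sensors, $n(1-d^i/n)^k\approx ne^{-kd^i/n}=n\log^3 n/d$, is still large. However, there are two genuine gaps in how you propose to close the argument.

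First, your termination criterion is wrong. In steps (iv)--(v) you locate a signature class of size at least $|R|$ times a shrinkage factor and conclude that since this is $\ge 2$ ``the robber never loses.'' Surviving one round with two candidates does not make the robber win: the game is infinite, and a per-round multiplicative shrinkage bound (your surviving fraction is $e^{-kd^i/n}=\log^3 n/d\ll 1$) compounds, so the candidate class would collapse to a single vertex after finitely many rounds unless you also quantify how much the class re-expands under $\Nn(\cdot,1)$ before the next probe. The paper instead maintains an \emph{absolute} lower bound: it shows (i) for every sensor set $S$ of size $s$, the set of vertices at distance $\ge i+1$ from all of $S$ has size at least $r/2$ with $r=n\log^3 n/d$ (a statement about all of $V$, not a fraction of the current class), and (ii) every set of size $r/4$ dominates all but $r/4$ vertices, so the intersection of $\Nn(R^t,1)$ with the new all-$(i+1)$ class always has size at least $r/2-r/4=r/4$. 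This second lemma, which guarantees the robber can actually \emph{reach} the large class from her current one, has no analogue in your proposal.

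Second, your plan to ``union bound over the robber's candidate classes'' is not feasible: the candidate classes are arbitrary subsets determined by the game history, and there are too many of them. The paper's decomposition is precisely what makes the union bounds tractable: the first lemma requires a union bound only over the $\binom{n}{s}$ sensor sets (beaten by a Chernoff failure probability of $\exp(-\Theta(n\log^3 n/d))$, obtained by exposing $\Nn(S,i-1)$ first so that the last layer is fresh randomness), and the second lemma requires a union bound only over sets of the single size $r/4$. Relatedly, your pigeonhole step as stated does not produce the $e^{-kd^i/n}$ fraction --- bounding the number of realized signatures, or the number of informative sensor--vertex incidences (which is $\approx |\Nn(R,1)|\cdot kd^i/n\gg|\Nn(R,1)|$, so Markov gives nothing), is not enough; you need the precise computation that the \emph{specific} all-$(i+1)$ class has the claimed size, with concentration strong enough to survive the union bound over $S$.
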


We say that two vertices $v,w$ are {\em diametrically opposite}, provided the the distance between them equals the diameter of the graph. 
Our goal is to bound the number of vertices which are diametrically opposite to all the vertices in the set of sensors. 
In this way, we prove Theorem \ref{theorem lower bound} by arguing that a greedy strategy works for the robber, provided
the number of sensors of the cop is sufficiently small. Specifically, the strategy of the robber is to maintain maximum distance from all of the sensors.
In doing so, they are able to evade the cop indefinitely, and thus win the game.

In order to show that this greedy strategy works, first recall that by Lemma~\ref{lem:diameter}, $\aas$ $G$ has diameter $i+1$.
The lemma below strengthens this result, by ensuring that no matter where the sensors of the cop are placed,
there always exists a large selection of vertices which are simultaneously at distance $i+1$ from all of the sensors.

\begin{lemma}\label{lemma diametrically opposed}
Suppose that $d:=p \cdot n$ is such that $\log n \ll d \ll n$. Let $i = i(n) \in \N$ be the largest integer $i$ such that $d^i \ll n \log \log n$.
Let 
$$
s:=\Big( \log d -3\log\log n \Big)\frac{n}{d^i} \quad \text{ and } \quad r := \frac {n \log^3n}{d}.
$$
Then the following holds a.a.s.\ for $G = (V,E) \in \Gnp$: 
for every set $S\subseteq V$ with $|S|=s$, we have that
\[
|V \setminus \Nn(S,i)| = n-|\Nn(S,i)|\geq r/2.
\]
\end{lemma}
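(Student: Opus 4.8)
The plan is to show that, uniformly over the cop's choice of $S$, there are many vertices at distance $i+1$ (the diameter, by Lemma~\ref{lem:diameter}) from every sensor. The starting point is a reformulation: a vertex $w$ lies in $V\setminus\Nn(S,i)$ precisely when $d(w,S)\ge i+1$, which happens if and only if $w$ has no neighbour in $T:=\Nn(S,i-1)$. So the quantity to bound is the number of vertices with no neighbour in $T$, and the whole difficulty is that $T$ depends on $S$, which the adversary selects after seeing the graph.

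Two ingredients combine. First, since $\Nn(S,i-1)=\bigcup_{v\in S}\Nn(v,i-1)$ and $d^{i-1}=o(n)$ in this range, Lemma~\ref{lem:gnp exp} (applied with a suitable slowly growing $\omega$) gives a.a.s., simultaneously for all $v$, that $|\Nn(v,i-1)|\le(1+o(1))d^{i-1}$; summing over $v\in S$ yields $|T|\le t:=(1+o(1))s\,d^{i-1}$ for every $S$ with $|S|=s$. Second, I will prove a self-contained statement on the \emph{unconditional} probability space: a.a.s., for every set $T'\subseteq V$ with $|T'|=t$, the number of vertices of $V\setminus T'$ having no neighbour in $T'$ is at least $r/2$. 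Since this count is monotone decreasing as the forbidden set grows, and $t=o(n)$ so that $T$ can always be completed to some $T'\supseteq T$ with $|T'|=t$, the two a.a.s.\ events intersect to give the lemma deterministically. This two-event decomposition is exactly the device of Remark~\ref{remark_cond} and lets me avoid working in a conditional space.

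For the self-contained statement, fix $T'$ with $|T'|=t$. For $w\in V\setminus T'$ the events ``$w$ has no neighbour in $T'$'' depend on disjoint edge-sets, hence are mutually independent, each of probability $(1-p)^{t}$; the count is therefore $\Bin(n-t,(1-p)^{t})$, with mean $(n-t)(1-p)^{t}$. Since $t\,p=(1+o(1))s\,d^{i}/n=(1+o(1))(\log d-3\log\log n)$, one gets $(1-p)^{t}=(1+o(1))(\log n)^{3}/d$, so the mean is $\sim r$. Chernoff's bound~(\ref{eq:chern}) then gives $\Prob[\,\text{count}<r/2\,]\le\exp(-\Theta(r))$, and a union bound over the at most $\binom{n}{t}\le\exp(t\log n)$ choices of $T'$ succeeds because $r=n\log^{3}n/d$ dominates $t\log n=(1+o(1))(\log d)\,n\log n/d$: their ratio is at least $(\log^{2}n)/\log d\ge\log n\to\infty$. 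Hence the total failure probability is $\exp(-\Theta(r)+t\log n)=o(1)$.

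The main obstacle is the \emph{precision} of the mean, i.e.\ showing $(n-t)(1-p)^{t}\ge(1-o(1))r$ rather than merely $\Theta(r)$. Writing $t=(1+\eta)s\,d^{i-1}$ with $\eta=o(1)$ and $\log(1-p)=-p(1+O(p))$, the relevant exponent is $-(\,\log d-3\log\log n\,)\bigl(1+O(\eta)+O(p)\bigr)$, so its error is $O\bigl((\eta+p)\log d\bigr)$. The term $\eta\log d=O(\log d/\sqrt{\omega})$ is made $o(1)$ by taking $\omega$ as large as $\omega\log n\le d$ permits (which works once $\log d>3\log\log n$, i.e.\ $d>(\log n)^{3}$, the only range in which $s>0$ and the lemma is non-vacuous), while the term $p\log d=d\log d/n$ is controlled provided $d$ is not too close to $n$. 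The slack $3\log\log n$ built into $s$ is precisely what creates room to absorb these errors together with the $(1-p)^{t}$-versus-$e^{-pt}$ discrepancy. Getting all of these error terms small enough that the mean stays above $r/2$ across the full range of $d$ and $i$ is the delicate part; everything else is a routine assembly of the expansion lemma, independence, Chernoff, and the union bound.
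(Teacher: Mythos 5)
Your proposal is correct and reaches the desired bound, but by a genuinely different route from the paper. The paper fixes $S$, runs a breadth-first exposure to depth $i-1$ from each sensor, and observes that conditionally on the revealed information $\scr{E}$ the edges between $\Ss(S,i-1)$ and $V\setminus\Nn(S,i-1)$ are still unexposed, so that $|R(S)|$ is conditionally binomial; it then applies Chernoff conditionally and union-bounds over the $\binom{n}{s}$ choices of $S$, taking some care (indicator of the $\scr{E}$-measurable event ``$S$ is good'') to keep the conditioning honest. You instead decouple entirely: one a.a.s.\ event from Lemma~\ref{lem:gnp exp} gives $|\Nn(S,i-1)|\le t$ for every $S$, and a second, fully unconditional event states that every \emph{deterministic} set $T'$ of size $t$ has at least $r/2$ non-neighbours outside it; monotonicity of the non-neighbour count in $T'$ then lets the two events intersect to yield the lemma. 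The price is a union bound over $\binom{n}{t}$ rather than $\binom{n}{s}$ sets, which you correctly verify is still affordable since $t\log n=O(n\log n\log d/d)=o(r)$; the payoff is that no conditional probability space ever appears, which is Remark~\ref{remark_cond} pushed one step further than the paper itself takes it. Your error analysis is also sound wherever the paper's is: the $\eta\log d$ term is indeed $o(1)$ throughout the non-vacuous range $d\ge\log^3n$, since there $\omega\ge\log^2 n$. Finally, the caveat you flag, that the $(1-p)^{t}$ versus $e^{-pt}$ discrepancy contributes $O(p\log d)=O(d\log d/n)$ to the exponent and is only negligible when $d$ is not too close to $n$, is not a defect of your argument relative to the paper's: the paper's own step $\left(1-\frac{d}{n}\right)^{(1+o(1/\log n))d^{i-1}s}\sim\exp\left(-(1+o(1/\log n))d^is/n\right)$ silently requires the same condition and fails for, say, $d=n/\sqrt{\log n}$ (where the expected number of vertices at distance $i+1$ from even a single fixed $S$ tends to $0$, so the stated conclusion cannot hold there). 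You were right to single this out.
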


\begin{proof}
Let $S\subseteq V$ be a set of size $s$. We will sequentially expose edges incident to $S$ in order to determine $\Nn(S,i-1)$.
Specifically, for $v \in S$, we expose edges via the following procedure:
\begin{itemize} 
    \item For $j=0, \ldots , i-2$ do:
    \item[] \quad expose the edges of $G$ incident to $\Nn(v,j)$ which are still unexposed.
\end{itemize}

Let us denote $\scr{E}_{v}$ as the information regarding the edges 
of $G$ we reveal by following this procedure for a fixed $v \in S$; that is,
$\scr{E}_v$ corresponds to the vertex pairs of $\binom{V}{2}$ which
are exposed, as well as indications as to whether each exposed
vertex pair is an edge of $G$. Similarly,
denote $\scr{E}$ as the information revealed
after following this procedure for each $v \in S$.

We first note that $\scr{E}_v$ is sufficient to determine $\Ss(v,j)$
for each $v \in S$ and $j=1, \ldots i-1$. At this point, let us say that the set $S$ is \textit{good}, provided for each
$v \in S$ and $j=1, \ldots ,i-1$ it holds that
\begin{equation*}
    |\Ss(v,j)| = \left(1+o\left(\frac{1}{\log n}\right)\right)d^j.
\end{equation*}

Now, for each $v\in S$, we have $\Nn(v,i-1)=\{v\}\cup\bigcup_{j=1}^{i-1} \Ss(v,j)$, so 
if $S$ is good then
\begin{align*}
|\Nn(v,i-1)|&=1+\sum_{j=1}^{i-1} |\Ss(v,j)| \\
&=1+\sum_{j=1}^{i-1}\left(1+o\left(\frac{1}{\log n}\right)\right)d^j\\
&=\left(1+o\left(\frac{1}{\log n}\right)\right)d^{i-1},
\end{align*}
where the last equality follows from the fact that $d^j=o(d^{j+1}/\log n)$ for all $1 \le j < i-1$. 
Thus,
\begin{equation} \label{eqn:good_size}
|\Nn(S,i-1)|=\left|\bigcup_{v\in S} \Nn(v,i-1) \right|\leq \left(1+o\left(\frac{1}{\log n}\right)\right)d^{i-1}s,
\end{equation}
provided $S$ is good.

Our goal is to determine the size of the set $R=R(S):=V \setminus \Nn(S,i)$, the set of vertices that are at distance at least $i+1$ from every vertex of $S$. Note that at this point, edges within $\Nn(S,i-1)$ as well as edges between $\Nn(S,i-2)$ and $V \setminus \Nn(S,i-1)$ have been exposed. However, of greater importance is the fact that the edges between $\Nn(S,i-1) \setminus \Nn(S,i-2)$ and $V \setminus \Nn(S,i-1)$ have not as yet been exposed. As such, the edges of $G$ between $V \setminus \Nn(S,i-1)$ and $\Nn(S,i-1) \setminus \Nn(S,i-2)$ are independent from $\scr{E}$. Moreover, the set $R$ is exactly the set of vertices in $V \setminus \Nn(S,i-1)$ that are \emph{not} adjacent to any vertex in $\Nn(S,i-1) \setminus \Nn(S,i-2)$. Thus, $|R|$ conditional on $\scr{E}$ is distributed as a binomial
of parameters $|V \setminus \Nn(S,i-1)|$ and $(1-p)^{|\Nn(S,i-1) \setminus \Nn(S,i-2)|}$.
In particular, its expectation is
\[
\E \Big[ |R| \, \Big| \, \scr{E} \Big] =|V \setminus \Nn(S,i-1)| \cdot (1-p)^{|\Nn(S,i-1) \setminus \Nn(S,i-2)|}.
\]

Now, if $S$ is good, then we may apply \eqref{eqn:good_size} to ensure
that
\begin{align*}
|V \setminus \Nn(S,i-1)| &\cdot (1-p)^{|\Nn(S,i-1) \setminus \Nn(S,i-2)|} \\
&\geq (n-2d^{i-1}s)\cdot \left(1-\frac{d}{n}\right)^{\left(1+o\left(1/\log n\right)\right)d^{i-1}s} \\
&\sim n \cdot \exp\Big( -\left(1+o\left(1/\log n\right)\right)d^{i}s/n \Big) \\
&\sim n \cdot \exp \left(- \log d + 3\log\log n\right) = n \log^3 n / d = r.
\end{align*}
Thus,
\begin{align*} 
 \E \Big[ |R| \, | \, \scr{E} \Big] \cdot \bm{1}_{\text{$S$ is good}} &= |V \setminus \Nn(S,i-1)| \cdot (1-p)^{|\Nn(S,i-1) \setminus \Nn(S,i-2)|} \cdot \bm{1}_{\text{$S$ is good}} \\
            &\ge (1 + o(1)) \cdot r \cdot \bm{1}_{\text{$S$ is good}},
\end{align*}
so since $r = n \log^3 n / d$,
\begin{equation}\label{eqn:conditional_expectation_size}
\E \Big[ |R| \, | \, \scr{E} \Big] \cdot \bm{1}_{\text{$S$ is good}} \ge (1 +o(1)) \cdot n \log^3 n / d \cdot \bm{1}_{\text{$S$ is good}}.
\end{equation}

Let us now apply the Chernoff bound to $|R|$ conditional on $\scr{E}$.
Observe then that 
\begin{equation}\label{eqn:chernoff_size}
\Pr \left( |R| \leq r/2 \, \Big| \, \scr{E} \right)  \leq \exp \left( - \Theta \left( \E \Big[ |R| \, \Big| \, \scr{E} \Big] \right) \right).
\end{equation}
Thus, after multiplying each side of \eqref{eqn:chernoff_size} by $\bm{1}_{\text{$S$ is good}}$, it holds that
\begin{align*}
    \Pr (|R| \leq r/2 \, | \, \scr{E}) \cdot \bm{1}_{\text{$S$ is good}} &\leq  \exp(- \Theta(\E \Big[ |R| \, | \, \scr{E} \Big]) ) \cdot \bm{1}_{\text{$S$ is good}}\\
                                                                         &\leq \exp(- \Theta(n \log^3 n / d) ) \cdot \bm{1}_{\text{$S$ is good}},
\end{align*}
where the final line follows from \eqref{eqn:conditional_expectation_size}.
Moreover, whether or not $S$ is good can be determined by $\scr{E}$,
so 
\[
\Pr (|R| \leq r/2 \, | \, \scr{E}) \cdot \bm{1}_{\text{$S$ is good}} = \Pr (\text{$|R| \leq r/2$ and $S$ is good} \, | \, \scr{E}).
\]
Thus, after taking expectations we get that
\begin{equation}\label{eqn:good_set_upper_bound}
    \Pr (\text{$|R| \leq r/2$ and $S$ is good}) \leq \exp ( - \Theta(n \log^3 n / d)) \cdot \Pr ( \text{$S$ is good}).
\end{equation}

We must now show that $\aas$ for any set $S$, we have that $|R(S)| \geq r/2$.
Now, by applying Lemma \ref{lem:gnp exp} together with \eqref{eqn:good_set_upper_bound} and 
the union bound, we get that
\begin{align*}
    \Pr \left( \cup_{S \in \binom{V}{s}} |R(S)| \leq r/2 \right) &\le \Pr \left( \cup_{S \in \binom{V}{s}} \text{$|R(S)| \leq r/2$ and $S$ is good} \right) \\
    & \quad + \Pr \left( \cup_{S \in \binom{V}{s}} \text{$S$ is bad} \right)  \\
                                        &\le \binom{n}s \exp ( - \Theta(n \log^3 n / d )) + o(1).
\end{align*}
Observe however that
\begin{eqnarray*}
\binom{n}s \exp ( - \Theta(n \log^3 n / d )) &\leq& \left(\frac{ne}s\right)^s \exp ( - \Theta(n \log^3 n / d )) \\
&\leq& \exp(s\log n - \Theta(n \log^3 n /d ) )\\
&=& \exp(\Theta(n \log^2 n / d^i) - \Theta(n \log^3 n / d)) \\
&=& \exp(\Theta(- \Theta(n \log^3 n / d)) = o(1).
\end{eqnarray*}
It follows that a.a.s.\ $R(S)\geq r/2$ for all sets $S\in \binom{V}{s}$.
\end{proof}

Lemma \ref{lemma diametrically opposed} ensures that $\aas$, no matter where the cop decides
to place their sensors, the equivalence class with signature $(i+1, \ldots ,i+1)$ will be
at least size $r/2$. Thus, in order to survive the first round, the robber may choose
this equivalence class. The next lemma allows us to bound the number of vertices which are not reachable by the robber,
and thus ensure that the robber always has a feasible follow-up move.

\begin{lemma}\label{lemma r/4}
Suppose that $d:=p \cdot n$ is such that $\log n \ll d \ll n$. 
Let 
$$
r := \frac {n \log^3n}{d}.
$$
Then the following holds a.a.s.\ for $G = (V,E) \in \Gnp$: 
for every set $R\subseteq V$ with $|R|=r/4$, we have
\[
|V \setminus \Nn(R,1)|\leq r/4.
\]
\end{lemma}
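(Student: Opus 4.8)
The plan is to prove the statement by a direct first-moment union bound over subsets, taking advantage of the fact that the parameter $r = n\log^3 n/d$ carries a $\log^3 n$ factor that is far more than enough to absorb the entropy of choosing the relevant sets. First I would recast the event combinatorially. A vertex $v$ lies in $V \setminus \Nn(R,1)$ exactly when $v \notin R$ and $v$ has no neighbour in $R$; thus $V \setminus \Nn(R,1)$ is the set of non-neighbours of $R$ lying outside $R$, and in particular it is disjoint from $R$. Consequently, if some $R$ with $|R|=r/4$ violates the conclusion, so that $|V \setminus \Nn(R,1)| > r/4$, then one can select $T \subseteq V \setminus \Nn(R,1)$ with $|T| = r/4$; this $T$ is disjoint from $R$ and has no edges of $G$ to $R$. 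Hence it suffices to show that a.a.s.\ there is no pair of disjoint sets $R, T \subseteq V$ with $|R| = |T| = r/4$ and no edges between them. It is enough to treat $|R| = r/4$ exactly, since enlarging $R$ only shrinks $V \setminus \Nn(R,1)$.

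Next I would estimate the probability of such a bad pair by the union bound. For a fixed disjoint pair $R, T$, the $(r/4)^2$ potential edges between them are independent, so the probability that none appears is $(1-p)^{(r/4)^2}$; summing over the at most $\binom{n}{r/4}^2$ choices gives an upper bound of
\[
\binom{n}{r/4}^2 (1-p)^{(r/4)^2}.
\]
I would then take logarithms. Using $\binom{n}{r/4} \le (4en/r)^{r/4}$ together with $4n/r = 4d/\log^3 n$, the two binomial coefficients contribute an entropy term of order $r \log d = O(r\log n)$. For the other factor, $(1-p)^{(r/4)^2} = \exp\big(-(1+o(1))\, p\, (r/4)^2\big)$, and since $p = d/n$ and $r = n\log^3 n/d$ one computes $p\, (r/4)^2 = \Theta(r \log^3 n)$.

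Finally I would compare the exponents: the failure probability is at most $\exp\big(O(r\log n) - \Theta(r\log^3 n)\big)$, and because $\log^3 n \gg \log n \ge \log d$ the negative term dominates, yielding a bound of $\exp(-\Theta(r\log^3 n)) = o(1)$. This establishes the claim for sets of size exactly $r/4$, and hence for all larger sets by monotonicity. The only delicate point---and the main, if mild, obstacle---is exactly this balancing of exponents: the union bound over $\binom{n}{r/4}^2$ pairs injects a cost of order $r\log d$, and one must check that the $\log^3 n$ in the no-edge probability (which is precisely the $\log^3 n$ built into the definition of $r$) overwhelms it. An essentially equivalent alternative is to note that, for fixed $R$, the random variable $|V \setminus \Nn(R,1)|$ is distributed as $\Bin(n - r/4,\, (1-p)^{r/4})$ with mean $o(1)$, and to apply an upper-tail large-deviation bound to $\Pr(|V \setminus \Nn(R,1)| \ge r/4)$ before taking the union bound; this would require a version of Chernoff's inequality valid for deviations far exceeding the mean, rather than the $\eps < 3/2$ form in~\eqref{eq:chern}.
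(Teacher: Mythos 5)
Your proposal is correct and is essentially the paper's own argument: the paper also bounds, for each fixed $R$, the probability that $|V\setminus \Nn(R,1)|\ge r/4$ by the expected number of $(r/4)$-subsets of $V\setminus R$ with no edge to $R$, i.e.\ by $\binom{n}{r/4}\bigl((1-p)^{r/4}\bigr)^{r/4}$, and then takes a union bound over $R$ --- exactly your union bound over disjoint pairs $(R,T)$, with the same balancing of the $O(r\log n)$ entropy term against the $\Theta(r\log^3 n)$ no-edge exponent. Your closing remark is also on point: the paper indeed sidesteps the Chernoff form \eqref{eq:chern} (which would not apply to deviations far above an $o(1)$ mean) by using this direct first-moment computation.
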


\begin{proof}
Fix $R\in \binom{V}{r/4}$. Our goal is to estimate the size of the set $U=U(R):=V\setminus \Nn(R,1)$, that is, the set of vertices of $V \setminus R$ that are not adjacent to any vertex in $R$. Clearly,
\begin{align*}
    \Pr(|U|\geq r/4)&\leq \binom{|V\setminus R|}{r/4}\left((1-p)^{|R|}\right)^{r/4} \\
    &\leq \binom{n}{r/4}\left((1-p)^{r/4}\right)^{r/4}\\
    &\leq \left(\frac{4ne}r\right)^{r/4}\cdot\exp\left(-\frac{d}{n}\cdot\frac{r}4\cdot\frac{r}4\right)\\
    &\leq \exp\left(\frac{r}4\log n - \frac{r}{16}\log^3 n\right)\\
    &= \exp\left(- \Theta( r \log^3 n) \right).
\end{align*}
Hence, by the union bound, the probability that some set $U(R)$ does \emph{not} satisfy the desired bound for its size is at most
\[
\binom{n}{r/4}\exp\left(- \Theta( r \log^3 n) \right) \leq \exp\left( \Theta( r \log n) - \Theta( r \log^3 n) \right) = \exp\left(- \Theta( r \log^3 n) \right) = o(1).
\]
It follows that a.a.s.\ $|U(R)|\leq r/4$ for all sets $R\in\binom{V}{r/4}$.
\end{proof}

We are now ready to prove Theorem \ref{theorem lower bound}. 

\begin{proof}[Proof of Theorem \ref{theorem lower bound}]
Since we aim for a statement that holds \aas\ we may assume that we have a deterministic graph $G$ that satisfies the properties in the conclusions of Lemmas~\ref{lemma diametrically opposed},~\ref{lemma r/4} and~\ref{lem:diameter}. The strategy for the robber is simple; he always stays in the equivalence class of vertices whose $S_j$-signature is $(i+1,i+1,\dots,i+1)$. 

Let $r:=n \log^3 n / d$. Assume the cops first choose a set $S_1$ of size $s$ as the sensor locations. Combining Lemma~\ref{lemma diametrically opposed} and Lemma~\ref{lem:diameter} we see that the equivalence class of vertices with $S_1$-signature $(i+1,i+1,\dots,i+1)$, call it $X_1=V_1$ is of size at least $r/2 \ge r/4$. Indeed, Lemma~\ref{lemma diametrically opposed} provides an upper bound for the size of \emph{all} equivalence classes with at least one value at most $i$ in their signatures. Lemma~\ref{lem:diameter} guarantees that the only other equivalence class is the one with signature $(i+1,i+1,\dots,i+1)$. The robber will choose this equivalence class. 
We can continue iteratively: for $j\in \N$, assume that the robber has chosen a set $V_j$ of size at least $r/4$, and the cops respond with sensors at the set $S_{j+1}$. Then let $X_{j+1}$ be the set of all vertices with $S_{j+1}$-signature $(i+1,i+1,\dots,i+1)$. By Lemma~\ref{lemma diametrically opposed} and Lemma~\ref{lem:diameter}, $|X_{j+1}|\geq r/2$, and by Lemma~\ref{lemma r/4}, $V_{j+1}:=N(V_j,1) \cap X_{j+1}$ is of size at least $r/4$. Thus the robber can always choose the equivalence class of vertices with signature $(i+1,i+1,\dots,i+1)$, and this class will always be of size at least $r/4$. This shows that the cops will never be able to locate the robber.
\end{proof}

\section{Upper Bound}\label{sec:upper}

In this section, we will prove two upper bounds. The first one will apply to random graphs with $p \cdot n \gg \log n$, the diameter equal to $i+1$, and when $d^i = o(n)$. The second bound will cover the remaining cases, provided that $p \cdot n \gg \log^3 n$. In the previous section, the robber was able to employ a greedy strategy of always staying diametrically opposite to all the sensors. In order to prove the upper bound, we will not give an explicit strategy for the cops, but instead we will use the probabilistic method to show that there exists a winning strategy for the cops.

\begin{theorem}\label{theorem upper bound}
Suppose that $d:=p \cdot n$ is such that $\log n \ll d \ll n$. Let $i=i(n) \in \N$ be the largest integer $i$ such that $d^i\ll n$. If $d^{i+1}/n-2\log n \to \infty$, then the following holds a.a.s.\ for $G \in \Gnp$: 
\[
\zeta(G)\le (1+o(1)) \left(\log d + 2 \log\log n\right)\frac{n}{d^i}\,.
\]
\end{theorem}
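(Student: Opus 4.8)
The plan is to prove the upper bound using the probabilistic method: rather than constructing an explicit cop strategy, I will show that a uniformly random choice of sensor set $S$ of size $k := (1+o(1))(\log d + 2\log\log n) n/d^i$ locates the robber with high probability, in the sense that the equivalence classes induced by the $S$-signatures are almost all singletons. The key intuition is that two distinct vertices $x \neq y$ receive the same $S$-signature only if every sensor $s \in S$ is equidistant from $x$ and $y$. By Lemma~\ref{lem:gnp exp}, for fixed $x \neq y$ the set $\Ss(x,i)\setminus\Ss(y,i)$ has size $\sim d^i$; a sensor landing in this set (or the symmetric one $\Ss(y,i)\setminus\Ss(x,i)$) at distance exactly $i$ from $x$ but not $y$ distinguishes the two. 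Thus the probability a single random sensor fails to distinguish a fixed pair is roughly $1 - 2d^i/n$, and since sensors are (nearly) independent, the probability all $k$ sensors fail is at most $(1 - 2d^i/n)^k \approx \exp(-2k d^i/n)$.

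Concretely, the plan is as follows. First I would fix the underlying graph $G$ to be a deterministic graph satisfying the conclusions of Lemma~\ref{lem:gnp exp} and Lemma~\ref{lem:diameter}, which is legitimate since these hold $\aas$. Next, for a random sensor set $S$ (chosen, say, by including each vertex independently or by a uniform random subset of the right size), I would bound the expected number of \emph{confused pairs} $\{x,y\}$ — pairs receiving identical $S$-signatures. For a single pair, using $|\Ss(x,i)\setminus\Ss(y,i)| \sim d^i$, the probability that no sensor distinguishes them is at most $\exp(-(1+o(1)) k d^i/n)$. Plugging in $k = (1+o(1))(\log d + 2\log\log n) n/d^i$ gives a per-pair failure probability of roughly $\exp(-(\log d + 2\log\log n)) = 1/(d \log^2 n)$. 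Summing over all $\binom{n}{2}$ pairs and noting $n^2/(d\log^2 n) = o(n)$ (since $d \gg \log n$), the expected number of confused pairs is small. By Markov, there \emph{exists} a sensor set $S$ for which the number of confused pairs is $o(n)$, in fact controllably small.

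The subtlety — and the main obstacle — is that finding a set $S$ with few confused pairs locates the robber in a \emph{single} round, but this would really be bounding $\beta(G)$, not $\zeta(G)$; one round of a near-perfect sensor set only pins the robber to a small set rather than a single vertex. The genuine content is to exploit the multi-round nature of the localization game: after one round the robber is confined to an equivalence class of size $O(\text{small})$, the robber then moves to a neighbour, and a \emph{fresh} random sensor set is deployed. I would therefore need to argue that, starting from a small candidate set, a new random sensor set of the \emph{same} size $k$ shatters it into singletons with high probability, since now we only need to distinguish pairs within a set of size $o(n)$ rather than all of $V$. Because the candidate set is small, the relevant union bound is over far fewer pairs, so the sensors distinguish everything; the iteration converges in $O(1)$ rounds. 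Making this reduction rigorous — tracking how the candidate set $\Nn(R,1)$ grows under a robber move (by a factor $\sim d$, per Lemma~\ref{lem:gnp exp}) yet still gets re-shattered by the next random $S$ — is the delicate part, and handling the dependence between the fixed graph $G$ and the random sensor placement (via the conditioning technique described in Remark~\ref{remark_cond}) is where the care is required.
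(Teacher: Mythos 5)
Your high-level plan is the same as the paper's (random sensor sets each round, per-pair distinguishing probability via Lemma~\ref{lem:gnp exp}, then iterate), but the quantitative execution has gaps that would make the argument fail as written. First, you state in your intuition that a single sensor fails to distinguish $x,y$ with probability about $1-2d^i/n$ (the distinguishing set is $D(x,y)=\bigcup_{j\le i}(\Ss(x,j)\setminus\Ss(y,j))\cup(\Ss(y,j)\setminus\Ss(x,j))$, of size $\sim 2d^i$), but your actual computation uses $\exp(-(1+o(1))kd^i/n)$ without the $2$, giving a per-pair failure probability $q\approx 1/(d\log^2 n)$ rather than the correct $1/(d^2\log^4 n)$. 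This factor of $2$ in the exponent is not cosmetic: it is exactly what makes the iteration shrink. If the candidate set $\Nn(R^t,1)$ has size $M$ and $X$ is the number of confused pairs, then the only available bound on the robber's next class is $|R^{t+1}|\le 2\sqrt{X}$ (a class of size $m$ contributes $\binom{m}{2}\ge m^2/4$ confused pairs --- a step you never make explicit, and without which ``few confused pairs'' does not bound the class size at all). Applying Markov and then growing by the maximum degree $\sim d$, the new candidate set has size about $2Md\sqrt{q\log n}$, so one needs $d\sqrt{q\log n}<1/2$. With $q=1/(d^2\log^4 n)$ this gives shrinkage by a factor $\log^{3/2}n/2$ per round; with your $q=1/(d\log^2 n)$ one gets $d\sqrt{q\log n}=\sqrt{d/\log n}\gg 1$ and the candidate set \emph{grows}. (Relatedly, your claim that $n^2/(d\log^2 n)=o(n)$ because $d\gg\log n$ is false; it would require $d\gg n/\log^2 n$.)

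Second, your picture of the multi-round dynamics is wrong: the candidate set is not ``re-shattered into singletons'' after the first confinement, and the game does not end in $O(1)$ rounds. Even with the correct $q=1/(d^2\log^4 n)$, after one round the candidate set still has $\Theta(n/\log^{3/2}n)$ vertices, hence $\Theta(n^2/\log^3 n)$ pairs, and the expected number of surviving confused pairs is $\Theta(n^2/(d^2\log^7 n))$, which tends to infinity unless $d$ is nearly $n$ (in particular for every $i\ge 2$). What actually happens --- and what the paper proves --- is geometric shrinkage of $|\Nn(R^t,1)|$ by a factor of $\log n$ per round over $t_F=\log n/\log\log n$ rounds, each round being ``good'' with probability $\ge 1-1/\log n$ by Markov's inequality, so that a union bound over the $t_F$ rounds still succeeds. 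Your proposal correctly flags the tension between the factor-$d$ growth of $\Nn(R,1)$ and the re-shattering, but resolving it requires the $2\sqrt{X}$ class-size bound and the correct constant in the exponent; as stated, the two errors compound and the iteration does not close. (The conversion from ``the randomized cop strategy beats any fixed robber strategy a.a.s.'' to the existence of a deterministic winning strategy, via the perfect-information reformulation of Section~\ref{sec:perfect}, also deserves an explicit sentence, though that part is routine.)
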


\begin{proof} 
In fact, we will prove something slightly stronger. Let 
$$
\omega=\omega(n) := \min \left\{ \frac {d}{\log n}, \frac {n}{d^i}, (\log n)^4 (\log \log n)^2 \right\}\,.
$$
Since $d \gg \log n$ and $d^i \ll n$, we get that $\omega \to \infty$ as $n \to \infty$.
Suppose that $G_n = (V_n, E_n)$ is a family of graphs which satisfies the following properties: 

For each $n \in \N$
\begin{itemize}
    \item [(a)] $|V_n| = n$,
    \item [(b)] for every $x,y \in V_n$ ($x\neq y$) and $j \in \N$ such that $1 \le j \le i$ we have that
    $$
    |\Ss(x,j) \setminus \Ss(y,j)| = (1 + O(1/\sqrt{\omega})) \cdot d^j,
    $$
    \item [(c)] the diameter of $G_n$ is $i+1$,
    \item [(d)] the maximum degree of $G_n$ is $(1+o(1)) \cdot d$.
\end{itemize}
Assuming these conditions, there exists some $N\in\mathbb{N}$ (which depends only on the bounds in (b) and (d), and not on the family $G_n$) such that for all $n\geq N$, (deterministically)
$$
\zeta(G_n) \le k:= \left(1+\frac {1}{\omega^{1/3}} \right)(\log d + 2\log\log n) \frac {n}{d^i} \sim (\log d + 2\log\log n) \frac {n}{d^i}.
$$

The result will follow from Lemma~\ref{lem:gnp exp} (that shows that $\Gnp$ satisfies property (b) and (d) a.a.s.\ with a uniform choice of error function) and Lemma~\ref{lem:diameter} (that shows that property (c) is satisfied a.a.s.). Indeed, Lemma~\ref{lem:gnp exp} can be applied as $d \ge \omega \log n$, $d^i/n \le 1/\omega = O(1/\sqrt{\omega})$, and $\omega \le (\log n)^4 (\log \log n)^2$--see the definition of $\omega$. Lemma~\ref{lem:diameter} can be applied as $d^i/n-2\log n = o(1)-2\log n \to -\infty$ and, by assumption, $d^{i+1}/n-2\log n \to \infty$.

\smallskip

Let us then concentrate on a deterministic family of graphs $G_n=(V_n,E_n)$ satisfying propeties (a)-(d). Recall that in Section~\ref{sec:perfect} we reformulated the game so that it can be viewed as a perfect information game, and so we may assume that the moves of the robber are guided by a perfect player that has a fixed strategy for a given deterministic graph $G_n$. In particular, the robber chooses sets $(R^{1}_{j_1},R^{2}_{j_2},\dots,R^{i}_{j_i})$ in response to the cop choosing sensor locations $(S_1,S_2,\dots,S_i)$. Such responses are predetermined before the game actually starts. See Section~\ref{sec:perfect} for more details. 

On the other hand, to get an upper bound for the localization number, the cops are going to use a simple strategy, namely, at each round $t$ of the game, the cops choose a random set $S_t \subseteq V_n$ of cardinality $k$ for the sensor locations (regardless of anything that happened during the game thus far). Clearly, this is a sub-optimal strategy but, perhaps surprisingly, it turns out that it works very well.

Trivially, $|\Nn(R^{1}_{j_1},1)| \le n$. Our goal is to show that with high probability, for each round $t$, we have
$$
|\Nn(R^{t+1}_{j_{t+1}},1)| \le |\Nn(R^{t}_{j_t},1)| / \log n.
$$ 
As a result, this bound will hold a.a.s.\ for $1 \le t \le t_F := \log n / \log \log n$, and so $|\Nn(R^{t+1}_{j_{t+1}},1)| \le n/\log^t n$. In particular, we will get that $|\Nn(R^{t_F+1}_{j_{t_F+1}},1)| \le 1$ and so the cops win before the end of round $t_F+1$.

Suppose that at some round $t$, the robber ``occupies'' the set $R^{t}_{j_t}$ in response to the cop choosing sensor locations $(S_1,S_2,\dots,S_t)$. As mentioned above, the cops choose the set $S_{t+1}$ at random. It would be convenient to generate this random set as follows: select $k$ vertices to form $S_{t+1}$ one by one, each time choosing a random vertex with uniform probability from the set of vertices not selected yet. Once $S_{t+1}$ is fixed, the set $\Nn(R^{t}_{j_t},1)$ is partitioned into sets having the same $S_{t+1}$-signatures. The robber then has to pick $R^{t+1}_{j_{t+1}}$, one of the equivalence classes. We will show that, regardless of her choice, $|\Nn(R^{t+1}_{j_{t+1}},1)| \le |\Nn(R^{t}_{j_t},1)| / \log n$ will hold with high probability.  

There are 
$$
\binom{|\Nn(R^{t}_{j_t},1)|}{2} \le  |\Nn(R^{t}_{j_t},1)|^2
$$ 
pairs of vertices. Let us focus on one such pair, $x,y$, and suppose that the cops put a sensor on some vertex $v \in V_n$. Note that this pair is distinguished by $v$ \emph{if and only if} $v$ belongs to the set
\begin{eqnarray*}
D(x,y) &:=& \bigcup_{j \ge 0} \Big( \Ss(x,j) \setminus \Ss(y,j) \Big) \cup \Big( \Ss(y,j) \setminus \Ss(x,j) \Big) \\
&=& \bigcup_{j = 0}^{i} \Big( \Ss(x,j) \setminus \Ss(y,j) \Big) \cup \Big( \Ss(y,j) \setminus \Ss(x,j) \Big).
\end{eqnarray*}
Indeed, if $v \in \Ss(x,j) \setminus \Ss(y,j)$, then the distance between $v$ and $x$ is $j$ but the distance between $v$ and $y$ is not. Moreover, since the diameter of $G_n$ is $i+1$ (property~(c)), in order to distinguish the pair $x,y$, the distance from $v$ to at least one of $x,y$ has to be at most $i$. This justifies the equality above. By Property~(b), we may estimate the size of the distinguishing set as follows:
\[
|D(x,y)| ~=~ \sum_{j=0}^i (1+O(1/\sqrt{\omega})) 2 d^j ~=~ (1+O(1/\sqrt{\omega})) 2 d^i.
\]
The probability that the pair cannot be distinguished by any of the sensors in $S_{t+1}$ is at most
\begin{eqnarray*}
\Big( 1 - |D(x,y)|/n \Big)^k &=& \Big( 1 - (1+O(1/\sqrt{\omega})) 2 d^i/n \Big)^k \\
&=& \exp \Big( - (1+O(1/\sqrt{\omega})) 2 d^i k /n \Big) \\
&=& \exp \Big( - (1+1/\omega^{1/3})(1+O(1/\sqrt{\omega})) 2( \log d + 2 \log \log n) \Big) \\
&\le& \exp \Big( - 2( \log d + 2 \log \log n) \Big) = \frac {1}{d^2 \log^4 n}\,.
\end{eqnarray*}

Let $X_{t+1}$ be the number of pairs in $\Nn(R^{t}_{j_t},1)$ with the same signature in $S_{t+1}$. Since $\E[X_{t+1}] \le |\Nn(R^{t}_{j_t},1)|^2 d^{-2} \log^{-4} n$, it follows immediately from Markov's inequality that $X_{t+1} \le |\Nn(R^{t}_{j_t},1)|^2 d^{-2} \log^{-3} n$ with probability at least $1-1/\log n$. If this bound holds, then we say the round is \emph{good}. If this is the case, then, regardless which equivalence class of the partition of $\Nn(R^{t}_{j_t},1) = R^{t+1}_{1} \cup R^{t+1}_{2} \cup \ldots \cup R^{t+1}_{\ell_{t+1}}$ the robber selects as her response, the selected set $R^{t+1}_{j_{t+1}}$ is of size at most $2 \sqrt{X_{t+1}} \le 2 |\Nn(R^{t}_{j_t},1)| d^{-1} \log^{-3/2} n$. Indeed, note that
$$
X_{t+1} = \sum_{j=1}^{\ell_{t+1}} \binom{|R^{t+1}_{j}|}{2} \ge \binom{|R^{t+1}_{j_{t+1}}|}{2} \ge |R^{t+1}_{j_{t+1}}|^2/4.
$$
Finally, since the maximum degree of $G_n$ is asymptotic to $d$ (property (d)), the closed neighbourhood of $R^{t+1}_{j_{t+1}}$ has the size at most 
$$
(2 + o(1)) |\Nn(R^{t}_{j_t},1)| \log^{-3/2} n \le |\Nn(R^{t}_{j_t},1)| \log^{-1} n, 
$$
as required.

It remains to show that a.a.s.\ the first $t_F = \log n / \log \log n$ rounds are good. Since each round is \emph{not} good with probability at most $1/\log n$, the probability that some round is not good is at most $t_F/\log n = o(1)$, and the proof is finished. We get that this randomized strategy for $k$ cops works a.a.s.\ and so the probability it works is larger than, say, 1/2 for $n$ sufficiently large. It follows that the cops have a winning strategy and so the claimed bound for $\zeta(G_n)$ holds deterministically.  
\end{proof}

Before we move to the upper bound that covers the remaining cases, let us briefly discuss why the bound changes. The size of the set $D(x,y)$ defined in the proof above that distinguishes the pair of vertices $(x,y)$ plays an important role in the proof---the larger the set, the smaller the upper bound we get. We noticed that 
$$
s = s(n) := |D(x,y)| = \sum_{j \ge 0} s_j, 
$$
where
$$
s_j := \left| \Big( \Ss(x,j) \setminus \Ss(y,j) \Big) \cup \Big( \Ss(y,j) \setminus \Ss(x,j) \Big) \right|.
$$

Suppose that $d \gg \log^3 n$. Let $i=i(n) \in \N$ be the largest integer $i$ such that $d^i \le 3 \log n$, and let $c = c(n) = d^i / n$. The previous bound, Theorem~\ref{theorem upper bound}, applies to the case when $c = o(1)$; in particular, the diameter is equal to $i+1$ a.a.s. For this case, $s_i$ is the dominating term in the sum: $s \sim s_i \sim 2d^{i}$. If $c \to A \in (0,\infty)$, then $s \sim s_i \sim 2n(1-e^{-A})e^{-A}$; in particular, $s$ increases when $A \in (0,\log 2)$ reaching its maximum at $(1/2+o(1))n$ but then it starts decreasing when $A \in (\log 2, \infty)$. When $c \to \infty$ but 
$$
c - ( \log d - \log \log d) \to B \in \R,
$$
then $s$ is dominated by two terms: $s_{i-1} \sim 2d^{i-1}$, and
$$
s_i \sim 2 n e^{-c} \sim \frac{2n (\log d) e^{-B}}{d}  \sim  \frac {2nce^{-B}}{d} = 2d^{i-1} e^{-B}.
$$
It follows that $s \sim s_{i-1}+s_i \sim 2d^{i-1}(1+e^{-B}) \sim 2ne^{-c}(e^B+1)$. In particular, $s \sim 2ne^{-c}$ when $B \to -\infty$ and $s \sim 2d^{i-1}$ when $B \to \infty$. Here is the summary of our observations:
$$
s \sim
\begin{cases}
2 d^i & \text{ if } c = o(1) \\
2n(1-e^{-A})e^{-A} & \text{ if } c \to A \in \R_+ \\
2ne^{-c} & \text{ if } c \to \infty \text{ and } c - (\log d - \log \log d) \to -\infty \\
2d^{i-1}(1+e^{-B}) = 2ne^{-c} (e^B+1) & \text{ if } c - (\log d - \log \log d) \to B \in \R \\
2d^{i-1} & \text{ if } c - (\log d - \log \log d) \to \infty \text{ and } c \le 3 \log n.
\end{cases}
$$

\bigskip

We are now ready to cover the remaining cases that Theorem~\ref{theorem upper bound} did not cover, and finalize the upper bound.

\begin{theorem}\label{theorem upper bound2}
Suppose that $d:=p \cdot n$ is such that $\log^3 n \ll d \ll n$. Let $i=i(n) \in \N$ be the largest integer $i$ such that $d^i\le 3 \log n$, and $c = c(n) = d^i/n$. Then, the following holds a.a.s.\ for $G \in \Gnp$.
\begin{itemize}
    \item [(i)] if $c \to A \in \R_+$, then
\[
\zeta(G)\le (1+o(1)) \left(\log d + 2 \log\log n\right)\frac {e^A}{1-e^{-A}}\,.
\]
    \item [(ii)] if $c \to \infty$ and $c - (\log d - \log \log d) \to -\infty$, then
\[
\zeta(G)\le (1+o(1)) \left(\log d + 2 \log\log n\right) e^c\,.
\]
    \item [(iii)] if $c - (\log d - \log \log d) \to B \in \R$, then
\begin{eqnarray*}
\zeta(G) &\le& (1+o(1)) \left(\log d + 2 \log\log n\right) \frac {e^c}{e^B+1} \\
&\sim&  \left(\log d + 2 \log\log n\right) \frac {n}{d^{i-1}(1+e^{-B})}\,.
\end{eqnarray*}
    \item [(iv)] if $c - (\log d - \log \log d) \to \infty$ and $c \le 3 \log n$, then
\[
\zeta(G)\le (1+o(1)) \left(\log d + 2 \log\log n\right) \frac {n}{d^{i-1}}\,.
\]
\end{itemize}
\end{theorem}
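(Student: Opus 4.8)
The plan is to run the \emph{same} randomized cop strategy and the \emph{same} contraction argument as in the proof of Theorem~\ref{theorem upper bound}, changing only the value of $k$ according to the regime. As there, I would first pass to a deterministic family $G_n=(V_n,E_n)$ satisfying: $|V_n|=n$; maximum degree $(1+o(1))d$; the estimate $|\Ss(x,j)\setminus\Ss(y,j)|=(1+o(1))d^j$ for all $x\neq y$ and $1\le j\le i-1$ (from Lemma~\ref{lem:gnp exp}, which applies since $d^{i-1}=cn/d\le 3n\log n/d=o(n)$); and the dense-regime estimate for $|\Ss(x,i)\setminus\Ss(y,i)|$, and near the threshold also for $|\Ss(x,i-1)\setminus\Ss(y,i-1)|$, from Lemma~\ref{lem:gnp exp2}, for all $x\neq y$. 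Because $d\gg\log^3 n$ and $c\le 3\log n$, all of these hold $\aas$, so it suffices to prove the bound deterministically on such a family.

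At each round the cops pick a uniformly random set $S_{t+1}$ of size $k$, and the crux is to bound the probability that a fixed pair $x,y\in\Nn(R^t_{j_t},1)$ survives undistinguished. Exactly as in Theorem~\ref{theorem upper bound}, this probability equals $(1-|D(x,y)|/n)^k=\exp(-(1+o(1))|D(x,y)|k/n)$, where $D(x,y)=\bigcup_{j\ge 0}\big((\Ss(x,j)\setminus\Ss(y,j))\cup(\Ss(y,j)\setminus\Ss(x,j))\big)$. The whole point is that $s:=|D(x,y)|=\sum_{j\ge 0}s_j$ now saturates rather than growing like $2d^i$; substituting the dominant-term asymptotics already derived in the discussion preceding the theorem gives $s\sim 2n(1-e^{-A})e^{-A}$ in case (i), $s\sim 2ne^{-c}$ in case (ii), $s\sim 2ne^{-c}(e^B+1)=2d^{i-1}(1+e^{-B})$ in case (iii), and $s\sim 2d^{i-1}$ in case (iv). Choosing $k\sim 2(\log d+2\log\log n)\,n/s$ (with the same $(1+o(1))$ safety factor as in Theorem~\ref{theorem upper bound}) makes the survival probability at most $1/(d^2\log^4 n)$, the exact bound used there; one checks case by case that this $k$ is precisely the quantity claimed in (i)--(iv).

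With this choice of $k$ the remainder is verbatim: there are at most $|\Nn(R^t_{j_t},1)|^2$ pairs, so by Markov's inequality the number $X_{t+1}$ of undistinguished pairs exceeds $|\Nn(R^t_{j_t},1)|^2 d^{-2}\log^{-3}n$ with probability at most $1/\log n$ (a \emph{good} round); on a good round every equivalence class, in particular the robber's choice, has size at most $2\sqrt{X_{t+1}}\le 2|\Nn(R^t_{j_t},1)|d^{-1}\log^{-3/2}n$; multiplying by the maximum degree $(1+o(1))d$ yields $|\Nn(R^{t+1}_{j_{t+1}},1)|\le|\Nn(R^t_{j_t},1)|/\log n$. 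Iterating over the first $t_F=\log n/\log\log n$ rounds---all good $\aas$ since each fails with probability at most $1/\log n$---drives the robber's class down to a single vertex, so the cops win.

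The main obstacle is controlling $s=|D(x,y)|$ uniformly over all pairs $x\neq y$ in the saturated regime, which is precisely where Lemma~\ref{lem:gnp exp2} is indispensable: once $c\gtrsim\log d-\log\log d$ the outer sphere $\Ss(x,i)\setminus\Ss(y,i)$ stops behaving like $d^i$ and shrinks to $n(1-e^{-c})e^{-c}$ (and eventually to $O(\log^4 n)$), so the dominant contribution to $s$ migrates from radius $i$ to radius $i-1$; case (iii) is the delicate transition where $s_{i-1}$ and $s_i$ are comparable and must be combined carefully. Keeping the error terms $o(1)$ uniformly so that a single $k$ works for every pair, and verifying the contraction still holds when $\Nn(R^t_{j_t},1)$ is almost all of $V_n$, are the only points needing care beyond Theorem~\ref{theorem upper bound}.
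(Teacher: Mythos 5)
Your proposal is correct and follows essentially the same route as the paper: the authors likewise reuse the randomized-sensor strategy and contraction argument of Theorem~\ref{theorem upper bound} verbatim, invoking Lemma~\ref{lem:gnp exp2} to replace the estimate $s\sim 2d^i$ by the regime-dependent asymptotics of $s=|D(x,y)|$ and setting $k\sim 2n(\log d+2\log\log n)/s$ in each case (using the trivial lower bounds $s\ge s_i$ and $s\ge s_{i-1}$ in cases (ii) and (iv)). No substantive differences.
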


\begin{proof}
The proof of this theorem is almost identical to the one of Theorem~\ref{theorem upper bound}, and so we will only highlight differences. We will use the definitions of $s$ and $s_j$ that we introduced right before the statement of this theorem. We used Lemma~\ref{lem:gnp exp} to estimate $s$ in Theorem~\ref{theorem upper bound} but this time we will also need Lemma~\ref{lem:gnp exp2}. As the asymptotic behaviour of $s$ changes, we will need to adjust $k$ accordingly. However, in each case, $k \sim 2n(\log d + 2 \log \log n)/s$. We will deal with each case independently.

\medskip

For part (i), after setting 
$$
\omega' = \omega'(n) = \min \left\{ \frac {d}{\log^3 n}, \frac {\log^2 n}{(\log \log n)^2} \right\},
$$
we get that 
\begin{eqnarray*}
s &=& \sum_{j=0}^i s_j = \sum_{j=0}^{i-1} s_j + s_i = (1+o(1)) 2 d^{i-1} + (1+O(1/\sqrt{\omega'})) 2n (1-e^{-A})e^{-A} \\ 
&=& (1+O(1/\sqrt{\omega'})) 2n (1-e^{-A})e^{-A},
\end{eqnarray*}
and so the upper bound has to be adjusted to 
$$
k := \left(1+\frac {1}{\omega'^{1/3}} \right) (\log d + 2 \log \log n) \frac {e^A}{1-e^{-A}}.
$$

For part (iii), we set
$$
\omega' = \omega'(n) = \min \left\{ \frac {d}{\log^3 n}, \log \log n \right\},
$$
and observe that 
\begin{eqnarray*}
s_i &=& (1+O(1/\sqrt{\omega'})) 2n (1-e^{-c})e^{-c} \\ 
&=& (1+O(1/\sqrt{\omega'})) 2n (1+O(1/c))e^{-c} \\
&=& (1+O(1/\sqrt{\omega'})) 2n e^{-c} \\
&=& (1+O(1/\sqrt{\omega'})) 2n d^{-1} (\log d) e^{-B} \\
&=& (1+O(1/\sqrt{\omega'})) 2n d^{-1} c (1+O(\log \log d/\log d)) e^{-B} \\
&=& (1+O(1/\sqrt{\omega'})) 2 cn d^{-1} e^{-B} \\
&=& (1+O(1/\sqrt{\omega'})) 2 d^{i-1} e^{-B}.
\end{eqnarray*}
On the other hand, 
$$
\sum_{j=0}^{i-1} s_j = (1+O(1/d)) s_{i-1} = (1+O(1/(\sqrt{\omega'}\log n))) 2d^{i-1} = (1+O(1/\sqrt{\omega'})) 2d^{i-1}.
$$
It follows that $s = (1+O(1/\sqrt{\omega'})) 2d^{i-1} (1+e^{-B})$, and so the upper bound has to be adjusted to 
$$
k := \left(1+\frac {1}{\omega'^{1/3}} \right) (\log d + 2 \log \log n) \frac {n}{d^{i-1} (1-e^{-B})}.
$$

For part (ii), we observe that $s_i / s_{i-1} = e^{-B} \to \infty$. One can apply a trivial bound $s \ge s_i = (1+O(1/\sqrt{\omega'})) 2 d^{i-1} e^{-B}$ and adjust the definition of $k$ to get the desired bound. Similarly, for part (iv), we observe that $s_i / s_{i-1} = e^{-B} \to 0$. (In fact, if $c - 2 \log n \to \infty$, then a.a.s.\ the diameter of a random graph is $i$ and so there is no need to consider $s_i$ anymore.) This time we use the fact that $s \ge s_{i-1} = (1+O(1/\sqrt{\omega'})) 2d^{i-1}$. The claimed bound holds and the proof is finished.
\end{proof}

\end{document}